\newtheorem{theorem}{Theorem}[section]
\newtheorem{lemma}[theorem]{Lemma}
\theoremstyle{definition}
\newtheorem{definition}[theorem]{Definition}
\newtheorem{example}[theorem]{Example}
\theoremstyle{remark}
\newtheorem{remark}[theorem]{Remark}
\numberwithin{equation}{section}
\theoremstyle{plain}
\newtheorem{corollary}{Corollary}
\newtheorem{proposition}{Proposition}
\newcommand{\gp}{\dot{\gamma}}
\begin{document}
\title {Some Curvature Pinching results for Riemannian manifolds with density.}

\author{William Wylie}
\address{215 Carnegie Building\\
Dept. of Math, Syracuse University\\
Syracuse, NY, 13244.}
\email{wwylie@syr.edu}
\urladdr{https://wwylie.expressions.syr.edu}

\subjclass{53C25}
\keywords{}

\begin{abstract}
 In this note we consider versions of both Ricci and sectional curvature pinching for Riemannian manifold with density.  
In the Ricci curvature case  the main result implies a  diameter estimate that is new even for  compact shrinking Ricci solitons.  In the case of sectional curvature  we prove a new sphere theorem.   \end{abstract}

\maketitle

\section{Introduction}


Let $(M,g)$ be a Riemannian manifold and $X$ be a smooth vector field on $M$.  There are a number of different ``weighted" curvatures defined for the triple $(M,g,X)$.  Perhaps the simplest is the Bakry-Emery Ricci tensor, which denote as $ \mathrm{Ric}_X  = \mathrm{Ric}+ \frac12 L_X g, $
 When $X = \nabla f$ for some function $f$, we write $\mathrm{Ric}_f = \mathrm{Ric} + \mathrm{Hess} f$.   
 
 The study of  $\mathrm{Ric}_X$  pre-dates Bakry-Emery \cite{BE} and  goes back at least to Lichnerowicz, who called it the $C$ tensor \cite{Lich1, Lich2}.   $\mathrm{Ric}_X$ appears in  Perelman \cite{Per} and Hamilton's  \cite{Hamilton} work on the Ricci flow and is related to  Lott-Vilanni \cite{LV} and Sturm's \cite{Sturm1, Sturm2} theory of Ricci curvature for metric measure spaces.  Partly due to  this motivation,  a number of mathematicians have recently investigated  the connections between  $\mathrm{Ric}_X$ and  the concepts of classical  Riemannian geometry.  There are far too many recent papers in this direction to reference all of them  in this note, so we will only cite the works \cite{Lott, Morgan, Morgan3, MunteanuWang, WeiWylie} as well as chapter 18  of \cite{MorganBook} and  the references there-in. 

The first pinching condition we will consider is  a positive lower bound on the Bakry-Emery Ricci curvature  and  an upper bound on the classical Ricci curvature.  After rescaling we will write the condition as
\begin{eqnarray} \label{RicPinch} \mathrm{Ric}_X \geq \varepsilon(n-1) g \qquad \mathrm{Ric} \leq(n-1)g \qquad \varepsilon > 0.  \end{eqnarray}
  First note that if $M$ is compact, then $\varepsilon \leq 1$ and $\varepsilon =1$ if and only if $M$ is an Einstein space and $X$ is a Killing field. This follows easily from the divergence theorem. 
   In the compact case we can thus think of the pinching constant $\varepsilon$ as measuring how far the space is from being an   Einstein manifold with a Killing field.

On the other hand, in the non-compact case it is  possible for $\varepsilon >1$, as the following example shows. 
   \begin{example} \label{Gaussian}  The Gaussian space  is the flat metric on $\mathbb{R}^n$ with  $f(x) = \frac12|x|^2$.  It  has $\mathrm{Ric}_f = g$ and $\mathrm{Ric}= 0$.  In fact,  any simply connected space with bounded  non-positive sectional curvature  has an $f$  such that $\mathrm{Ric}_f \geq g$.  See Example 2.2 of  \cite{WeiWylie}. 
\end{example} 

On the other hand, all of the manifolds in Example \ref{Gaussian} are diffeomorphic to $\mathbb{R}^n$.  Our first observation is that this is true in general. 

\begin{proposition}  \label{Prop_Euclidean}  Suppose $(M,g)$ is a complete Riemannian manifold and suppose that $\mathrm{Ric}_X > (n-1) g$ and  $ \mathrm{Ric} \leq(n-1)g$,   then $(M,g)$ is diffeomorphic to $\mathbb{R}^n$. 
\end{proposition}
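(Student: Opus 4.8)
The plan is to manufacture from $X$ a smooth, proper, Morse function on $M$ with exactly one critical point and then invoke Morse theory; the natural candidate is $|X|^{2}=g(X,X)$. First some bookkeeping: the pinching gives $\tfrac12 L_{X}g=\mathrm{Ric}_{X}-\mathrm{Ric}>(n-1)g-(n-1)g=0$, so $X$ is an ``expanding'' field, $L_{X}g>0$; in particular $\operatorname{div}X>0$, so the divergence theorem forbids $M$ from being compact. The case $n=1$ is trivial, so assume $n\ge 2$. Along any unit-speed geodesic $\gamma$ one has $\tfrac{d}{dt}g(X,\gp)=\tfrac12(L_{X}g)(\gp,\gp)=(\mathrm{Ric}_{X}-\mathrm{Ric})(\gp,\gp)>0$, so $g(X,\gp)$ is strictly increasing along $\gamma$.

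The key step is the claim that $g(X,\gp)\to+\infty$ along every ray $\gamma:[0,\infty)\to M$ based at a point $p$. Along such a ray $r=d(p,\cdot)$ is smooth (a ray meets no cut points of its basepoint), and the weighted Bochner formula for $\Delta_{X}=\Delta-X$ applied to $r$, together with $|\mathrm{Hess}\,r|^{2}\ge (\Delta r)^{2}/(n-1)$ and $\mathrm{Ric}_{X}(\gp,\gp)>n-1$, yields
\[
(\Delta_{X}r)'\;\le\;-\frac{(\Delta r)^{2}}{n-1}-(n-1),\qquad \Delta_{X}r=\Delta r-g(X,\gp).
\]
Thus $\Delta_{X}r$ decreases at least linearly, so $\Delta_{X}r\to-\infty$. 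Now $g(X,\gp)$ is increasing; if it stayed bounded, then $\Delta r=\Delta_{X}r+g(X,\gp)\to-\infty$ as well, and for large $t$ the displayed inequality would read $(\Delta_{X}r)'\le -c\,(\Delta_{X}r)^{2}$, forcing $\Delta_{X}r$ to blow down to $-\infty$ in finite time — impossible, since $\Delta_{X}r$ is finite all along the ray. Hence $g(X,\gp)\to+\infty$.

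From this, $|X|$ is proper: given $x_{k}\to\infty$, join $p$ to $x_{k}$ by minimizing geodesics $\gamma_{k}$ of lengths $L_{k}\to\infty$; a subsequence of their initial velocities converges to the velocity of a ray, and since $g(X,\cdot)$ is increasing along each $\gamma_{k}$ and tends to $+\infty$ along the limiting ray, continuity forces $|X(x_{k})|\ge g\!\left(X(x_{k}),\dot\gamma_{k}(L_{k})\right)\to+\infty$. So $|X|^{2}$ is a smooth proper function. Moreover $\langle\nabla|X|^{2},X\rangle=(L_{X}g)(X,X)>0$ wherever $X\ne 0$, so the critical set of $|X|^{2}$ is exactly $Z=\{X=0\}$; properness then forces $Z\ne\emptyset$ and $\min|X|^{2}=0$ (a proper nonnegative function attains its minimum, at a critical point). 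At a zero $q$ of $X$, $\operatorname{sym}(\nabla X|_{q})=\tfrac12(L_{X}g)^{\sharp}|_{q}>0$ is invertible, whence $\mathrm{Hess}(|X|^{2})|_{q}=2(\nabla X|_{q})^{*}(\nabla X|_{q})>0$: every zero of $X$ is an isolated, nondegenerate minimum of $|X|^{2}$ at level $0$. So $|X|^{2}$ is a proper Morse function all of whose critical points are index-zero minima; as $M$ is connected there can be only one of them, and Morse theory then identifies $M$ with $\mathbb{R}^{n}$.

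The step I expect to be delicate is the claim $g(X,\gp)\to+\infty$ along rays: it is precisely here that the (weighted) curvature pinching must be turned into honest control on $X$ without any a priori bound on $X$ or $\nabla X$, and the mechanism that makes this possible is that a ray has no conjugate points, so $\Delta_{X}r$ cannot blow down in finite time. In the gradient case $X=\nabla f$ this step just says that the strictly convex function $f$ is an exhaustion, and the conclusion is the classical one that a complete manifold carrying a proper strictly convex function is diffeomorphic to $\mathbb{R}^{n}$.
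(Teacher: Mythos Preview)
Your argument is correct, but it follows a genuinely different route from the paper's.

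\textbf{Properness of $|X|$.} The paper obtains $|X|\to\infty$ from a second-variation estimate: for any minimizing geodesic $\gamma$ of length $\ge\pi$ one has $\int_\gamma \mathrm{Ric}(\gp,\gp)\le (n-1)\pi$ (via the test function $\phi=\sin$ on the ends, $1$ in the middle), and integrating $\mathrm{Ric}_X(\gp,\gp)\ge (n-1)\varepsilon$ then gives the explicit lower bound $g(X(q),\gp)\ge -(n-1)\pi-|X(p)|+(n-1)\varepsilon\, d(p,q)$. You instead run the weighted Riccati inequality along rays and use a finite-time blow-down contradiction. Both work; the second-variation route is shorter, yields an explicit linear growth rate for $g(X,\gp)$, and is the workhorse for all the other results in the paper (diameter, injectivity radius, topological finiteness), so it is the more economical choice in context. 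Your Riccati argument is self-contained and illustrates nicely why the absence of conjugate points along a ray is exactly what prevents blow-down.

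\textbf{From a zero of $X$ to $M\cong\mathbb{R}^n$.} The paper uses Grove--Shiohama critical point theory for the distance function: if $p$ is a zero of $X$ and $\gamma$ is any geodesic from $p$, then $g(X(\gamma(t)),\gp(t))=\int_0^t \tfrac12 L_Xg(\gp,\gp)\,ds>0$, so no point is critical for $d(p,\cdot)$ and $M\cong\mathbb{R}^n$. You instead verify that $|X|^2$ is a proper Morse function whose only critical points are nondegenerate minima (using that $\mathrm{sym}(\nabla X)>0$ makes $\nabla X$ invertible at each zero), and invoke classical Morse theory. The paper's route is a one-line computation once the zero is in hand; your route needs the Hessian computation but has the virtue of staying entirely within smooth Morse theory rather than appealing to the nonsmooth distance-function framework.

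In short: both proofs first show $L_Xg>0$, then $|X|$ is proper, then $X$ has a zero; they diverge in how they establish properness and in which ``Morse-type'' theory they invoke at the end. The paper's choices are tuned to feed the quantitative estimates needed later; yours are a clean standalone alternative.
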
 

Note that Proposition \ref{Prop_Euclidean} is optimal as the product of an Einstein metric equipped with a Killing field  and a Gaussian will have $\mathrm{Ric}_X = (n-1)g$ and $\mathrm{Ric} \leq(n-1) g$.  We also note that there are  other interesting spaces satisfying (\ref{RicPinch}) as  all shrinking Ricci solitons ($\mathrm{Ric}_X = \lambda g$) with bounded Ricci curvature are included in this class.  Examples of shrinking Ricci solitons can be found in \cite{Cao, Koiso, FIK, WangZhu}. 


We are interested in the topology of spaces satisfying (\ref{RicPinch}).  
In this direction,  the author showed  in  \cite{WylieI} that if  $\mathrm{Ric}_X \geq \varepsilon(n-1) g$, then $M$ has finite fundamental group.  In  \cite{Fang} it was shown that if $X = \nabla f $ and (\ref{RicPinch}) holds then $M$ is topologically finite, i.e. it is homeomorphic to to the interior of a manifold with boundary.  Our first main result is to extend this second result to the case where $X$ is not a gradient field. 
\begin{theorem} \label{Thm_Topology}
Suppose $(M,g)$ is a complete Riemannian manifold  satisfying (\ref{RicPinch}), then $M$ is homeomorphic  to the interior of a manifold with boundary. 
\end{theorem}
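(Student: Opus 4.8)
The plan is to run the standard critical-point theory of distance functions. Fix a base point $p\in M$ and let $r=d(p,\cdot)$. By critical point theory for distance functions (Grove--Shiohama, Cheeger), if $r$ has no critical points in $M\setminus B_p(R_0)$ for some $R_0$, then $M$ is homeomorphic to the interior of the compact manifold with boundary $\overline{B_p(R_0')}$ for a suitable $R_0'\ge R_0$. So the whole problem reduces to bounding the critical set of $r$; this is also the route Fang takes in the gradient case, and no diameter bound is to be expected (shrinking solitons and the spaces of Example \ref{Gaussian} are noncompact), so this reduction is essential.

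The analytic input is the weighted Laplacian comparison, together with the observation that makes it insensitive to whether $X$ is a gradient. Along a unit-speed minimal geodesic $\gamma$ from $p$ put $h(t)=\langle X(\gamma(t)),\gp(t)\rangle$; since $\nabla_{\gp}\gp=0$ we get $h'(t)=\langle\nabla_{\gp}X,\gp\rangle=\tfrac12(L_Xg)(\gp,\gp)=\mathrm{Ric}_X(\gp,\gp)-\mathrm{Ric}(\gp,\gp)$, which is exactly the role that $f''=\mathrm{Hess}\,f(\gp,\gp)$ plays in the gradient case. Hence the one-variable Riccati--Bochner inequalities go through verbatim with $f'$ replaced by $h$: writing $m_X=\Delta r-h$ one has $m_X'=-|\mathrm{Hess}\,r|^2-\mathrm{Ric}_X(\gp,\gp)\le-\varepsilon(n-1)$, while the upper bound $\mathrm{Ric}\le(n-1)g$ forces $h'(t)\ge(\varepsilon-1)(n-1)$ and hence a linear lower bound on $h$ along $\gamma$. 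One then feeds these into Fang's argument, an Abresch--Gromoll-type excess estimate: assume a sequence of critical points $x_i\to\infty$, build near-rays through the $x_i$, bound the excess $e_{p,q_i}(x_i)$ from above by the weighted comparison (with $\mathrm{Ric}_X\ge\varepsilon(n-1)g$ supplying the superharmonicity of $e$ and $\mathrm{Ric}\le(n-1)g$ keeping the geodesic spheres from focusing too fast) and from below by criticality, and derive a contradiction for large $i$. This produces the desired $R_0$, and the isotopy lemma finishes the proof.

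The main obstacle is isolating, and then removing, any use in Fang's proof of the global potential $f$ rather than only its directional derivatives along geodesics — for instance a weighted Busemann or excess identity that integrates $f$ itself, or an appeal to level sets of $f$. For general $X$ the substitute must be phrased for the $1$-form $X^{\flat}$: along any fixed minimizing geodesic the controlled primitive $t\mapsto\int_0^t h$ is still available, but it is path-dependent, so any global statement about ``$f$'' must be replaced by one about $X^{\flat}$ and $dX^{\flat}$, with the non-exactness of $X^{\flat}$ handled one end at a time. Concretely, one should check that every estimate entering the critical-point argument sees the density only through $h$ and $h'$ along the particular minimizing geodesics appearing in the excess construction, where these quantities are canonically defined and satisfy the bounds above; verifying that this is the only way the density enters — and giving an $X^{\flat}$-based replacement wherever it is not — is the crux of the argument.
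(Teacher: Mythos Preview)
Your framework is right --- fix $p$ and show $r=d(p,\cdot)$ has no critical points outside a ball --- and so is the key observation that along a minimal geodesic $h(t)=\langle X(\gamma(t)),\gp(t)\rangle$ satisfies $h'=\mathrm{Ric}_X(\gp,\gp)-\mathrm{Ric}(\gp,\gp)$. But from that point the paper is far more direct than your excess-estimate plan, and the detour you propose has real problems.

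The paper's argument is two lines. First (Lemma~\ref{IntRicci}), the upper bound $\mathrm{Ric}\le(n-1)g$ together with second variation of arclength, using the test function $\phi=\sin t$ on $[0,\pi/2]$, $\phi\equiv 1$ in the middle, $\phi=-\sin(t-r)$ on $[r-\pi/2,r]$, gives
\[
\int_0^r \mathrm{Ric}(\gp,\gp)\,dt\le (n-1)\pi
\]
for any minimizing geodesic of length $\ge\pi$, a bound \emph{independent of $r$}. Second, integrate $h'\ge \varepsilon(n-1)-\mathrm{Ric}(\gp,\gp)$ to obtain $h(r)\ge -|X(p)|+\varepsilon(n-1)r-(n-1)\pi$. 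Thus for $d(p,q)>\big((n-1)\pi+|X(p)|\big)/\big((n-1)\varepsilon\big)$ one has $\langle X(q),\gp(r)\rangle>0$ for \emph{every} minimal geodesic from $p$ to $q$, so $q$ is not critical (Lemma~\ref{CritPoint}), and the isotopy lemma finishes. No Laplacian comparison, no excess estimate, and the gradient question never arises because only $h$ along single geodesics enters.

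By contrast, the pointwise bound $h'\ge(\varepsilon-1)(n-1)$ you derive from $\mathrm{Ric}\le(n-1)g$ is too weak: for $\varepsilon<1$ it allows $h(r)\to-\infty$ linearly and gives no critical-point control. Your plan to recover via an Abresch--Gromoll excess argument is both unnecessary and fragile here: $\Delta_X=\Delta-\langle X,\nabla\cdot\rangle$ is not self-adjoint for non-gradient $X$, the $\infty$-Bakry--\'Emery Laplacian comparison needs auxiliary hypotheses (cf.\ \cite{WeiWylie}) that you have not supplied, and in any case Fang's paper uses Morse theory on $f$ itself rather than excess on $r$, so you are not actually porting an existing argument. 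The ``obstacle'' you flag --- purging global appearances of $f$ --- evaporates once you have the integral bound $\int_\gamma\mathrm{Ric}\le(n-1)\pi$; that is the missing lemma, and with it the proof is immediate.
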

  The new ingredient in our proof is to replace Morse theory  applied to the potential function $f$  in \cite{Fang} with the theory of critical points of the distance function.  The proof of Proposition  \ref{Prop_Euclidean} comes similarly  from considering critical points of the distance function.  
  
  We do not know whether the upper bound on Ricci curvature can be removed from Theorem \ref{Thm_Topology}.  In \cite{Fang} it is also shown that a shrinking gradient Ricci soliton is topologically finite if the scalar curvature is bounded. 
  
 Myers' diameter estimate states that a complete  space with a positive lower bound on the  Ricci tensor has diameter less than or equal to the diameter of the sphere of corresponding constant curvature.   
 Non-compact examples like Example \ref{Gaussian} show Myers' theorem is  not  true for the spaces satisfying (\ref{RicPinch}).   On the other hand, we prove the following gap theorem for the diameter of compact spaces satisfying (\ref{RicPinch}).   
 \begin{theorem}\label{Thm:Diam}
 Suppose that $(M,g)$ is compact  satisfying (\ref{RicPinch}), then for any $p \in M$,  $M \subset B\left( p,   \frac{(n-1) \pi + |X(p)|}{(n-1) \varepsilon}\right)$.  In particular, if $X$ has a zero then $\mathrm{diam}(M) \leq \frac{2 \pi}{\varepsilon}$. 
 \end{theorem}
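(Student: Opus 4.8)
The plan is to bound $d(p,q)$ for an arbitrary $q\in M$; since $M$ is compact it is enough to do this when $q$ is a point at maximal distance from $p$, so fix such a $q$ and a unit–speed minimizing geodesic $\gamma\colon[0,\ell]\to M$ with $\gamma(0)=p$, $\gamma(\ell)=q$, $\ell=d(p,q)$. The first thing I would extract is a good choice of $\gamma$: because $q$ is a farthest point from $p$, Berger's lemma produces a minimizing geodesic from $p$ to $q$ whose unit tangent at $q$ makes a non‑acute angle with $X(q)$. Writing $h(t)=\langle X(\gamma(t)),\gp(t)\rangle$, this gives $h(\ell)\le 0$, while trivially $|h(0)|\le |X(p)|$.

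The second ingredient is the pointwise identity along $\gamma$. Since $\gamma$ is a geodesic, $h'(t)=\langle\nabla_{\gp}X,\gp\rangle=\tfrac12(L_Xg)(\gp,\gp)=\mathrm{Ric}_X(\gp,\gp)-\mathrm{Ric}(\gp,\gp)$. Integrating over $[0,\ell]$ and using $\mathrm{Ric}_X\ge\varepsilon(n-1)g$,
\[
h(\ell)-h(0)=\int_0^\ell\!\big(\mathrm{Ric}_X(\gp,\gp)-\mathrm{Ric}(\gp,\gp)\big)\,dt\ \ge\ \varepsilon(n-1)\ell-\int_0^\ell\mathrm{Ric}(\gp,\gp)\,dt ,
\]
so, using $h(\ell)\le 0$ and $-h(0)\le |X(p)|$,
\[
\varepsilon(n-1)\,\ell\ \le\ \int_0^\ell\mathrm{Ric}(\gp,\gp)\,dt\ +\ |X(p)| .
\]
Thus the theorem reduces to the estimate $\int_0^\ell\mathrm{Ric}(\gp,\gp)\,dt\le (n-1)\pi$.

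For this last inequality I would use that $\gamma$ is minimizing, hence has no conjugate point in $(0,\ell)$, together with the upper bound $\mathrm{Ric}\le(n-1)g$. Equivalently, let $\psi$ be defined by $\Delta r\circ\gamma=(n-1)\psi'/\psi$ with $\psi(0)=0$, $\psi'(0)=1$ and $\psi>0$ on $(0,\ell)$; Bochner's formula gives $(n-1)\psi''+\mathrm{Ric}(\gp,\gp)\psi\le 0$. Since $\tfrac1{n-1}\mathrm{Ric}(\gp,\gp)\le 1$, a Sturm comparison of $\psi$ with $\sin$ on $[0,\pi]$, combined with the behaviour of $\Delta r$ at the farthest point $q$, forces $\int_0^\ell\mathrm{Ric}(\gp,\gp)\,dt\le(n-1)\pi$. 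Putting everything together yields $\varepsilon(n-1)\,d(p,q)\le (n-1)\pi+|X(p)|$, i.e. $q\in B\!\big(p,\tfrac{(n-1)\pi+|X(p)|}{(n-1)\varepsilon}\big)$; since $q$ was a point of maximal distance from $p$ this gives $M\subset B\!\big(p,\tfrac{(n-1)\pi+|X(p)|}{(n-1)\varepsilon}\big)$. The "in particular" is then immediate: if $X(o)=0$, apply the estimate with $p=o$ to get $M\subset B(o,\pi/\varepsilon)$, whence $d(x,y)\le d(x,o)+d(o,y)<2\pi/\varepsilon$ for all $x,y$, so $\mathrm{diam}(M)\le 2\pi/\varepsilon$.

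The step I expect to be the main obstacle is precisely the curvature–integral bound $\int_0^\ell\mathrm{Ric}(\gp,\gp)\,dt\le(n-1)\pi$: this is where both halves of the pinching condition (\ref{RicPinch}) and the choice of $q$ as a farthest point must be combined, and squeezing out the sharp constant $(n-1)\pi$ — rather than something that would only give a Myers‑type bound with $\sqrt\varepsilon$, or with $\sup_M|X|$ in place of $|X(p)|$ — requires care with the second variation and with the possible non‑smoothness of the distance function at $q$.
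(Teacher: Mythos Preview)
Your overall architecture matches the paper exactly: pick $q$ at maximal distance from $p$, use Berger's lemma to produce a minimizing geodesic with $\langle X(q),\gp(\ell)\rangle\le 0$, integrate the identity $h'=\mathrm{Ric}_X(\gp,\gp)-\mathrm{Ric}(\gp,\gp)$, and reduce everything to the estimate
\[
\int_0^\ell \mathrm{Ric}(\gp,\gp)\,dt \le (n-1)\pi .
\]
This is precisely the paper's Lemma~\ref{Lem:Crit} (followed by Lemma~\ref{CritPoint} and Lemma~\ref{BerCrit}); the paper simply packages your inequality $h(\ell)\le 0$ in the language of critical points of the distance function.

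The genuine gap is your treatment of the integral bound. The Bochner/Riccati route you sketch does not yield the sharp constant $(n-1)\pi$: with $\mathrm{Ric}\le (n-1)g$, Sturm comparison of $\psi$ against $\sin$ goes the wrong way (it gives a \emph{lower} bound on $\psi'/\psi$, i.e.\ that conjugate points do not come too soon), and there is no endpoint information on $\Delta r$ at $q$ that survives the non-smoothness of $r$ there. More to the point, the integral bound needs neither the lower bound on $\mathrm{Ric}_X$ nor the fact that $q$ is a farthest point---only that $\gamma$ is minimizing and $\mathrm{Ric}\le(n-1)g$. The paper proves it (Lemma~\ref{IntRicci}) by second variation with the explicit test function
\[
\phi(t)=\begin{cases}\sin t,& 0\le t\le \pi/2,\\ 1,& \pi/2\le t\le \ell-\pi/2,\\ \sin(\ell-t),& \ell-\pi/2\le t\le \ell,\end{cases}
\]
applied to each perpendicular parallel field; the index-form inequality $0\le\int_0^\ell\big((n-1)(\phi')^2-\phi^2\mathrm{Ric}(\gp,\gp)\big)dt$ together with $\mathrm{Ric}\le(n-1)g$ on the two end caps gives $\int_0^\ell\mathrm{Ric}(\gp,\gp)\,dt\le(n-1)\pi$ after an elementary computation. (This assumes $\ell\ge\pi$; when $\ell<\pi$ the bound is immediate from $\mathrm{Ric}\le(n-1)g$.) Once you replace your Sturm paragraph by this second-variation computation, the rest of your argument is complete and coincides with the paper's.
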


We also obtain a gap theorem for the injectivity radius. 

\begin{theorem} \label{Thm:Inj}Suppose that $(M,g)$ is complete manifold supporting a vector field $X$ which satisfies (\ref{RicPinch}). If there is a point $p\in M$ such that $\mathrm{inj}_p(M) \geq  \frac{(n-1) \pi + |X(p)|}{(n-1) \varepsilon}$ then $M$ is diffeomorphic to $\mathbb{R}^n$.  In particular, if $X$ has a zero and $\mathrm{inj}(M) \geq \frac{\pi}{\varepsilon}$, then $M$ is diffeomorphic to $\mathbb{R}^n$. 

\end{theorem}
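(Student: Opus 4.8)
The plan is to prove that $r=d(p,\cdot)$ has no critical points, in the sense of Grove--Shiohama, on $M\setminus\{p\}$. For a complete $M$ this already yields $M\cong\R^n$: once $(0,\infty)$ contains no critical value of $r$, one builds a smooth gradient-like vector field on $M\setminus\{p\}$ by a partition of unity (taking it to be the genuine $\nabla r$ near $p$, which is legitimate since $\operatorname{inj}_p(M)>0$), normalized so that its flow raises $r$ at unit rate; this flow identifies $\{r\le b\}$ with $\{r\le a\}$ for all $0<a<b$, and with $a<\operatorname{inj}_p(M)$ the set $\{r\le a\}$ is a smooth closed $n$-disk, so $M=\bigcup_b\{r\le b\}$ is diffeomorphic to the interior of a disk.

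Put $R_0=\frac{(n-1)\pi+|X(p)|}{(n-1)\varepsilon}$ and split $M$ into $B(p,R_0)$ and its complement. On $B(p,R_0)$ the hypothesis $\operatorname{inj}_p(M)\ge R_0$ forces $r$ to be smooth on $B(p,R_0)\setminus\{p\}$ with $|\nabla r|\equiv 1$, so there are no critical points there (a point inside has a unique minimal connection to $p$, and a single unit vector never has $0$ in its convex hull). It therefore remains to exclude a critical point $q$ with $d(p,q)\ge R_0$, and this is exactly the estimate behind Theorem \ref{Thm:Diam}: under (\ref{RicPinch}), a minimizing geodesic $\gamma\colon[0,\ell]\to M$ issuing from $p$ with $\ell\ge R_0$ cannot end at a critical point of $r$. (In a compact $M$, applying this at a maximum point of $r$ — necessarily a critical point — recovers Theorem \ref{Thm:Diam}.) Granting it, all critical points of $r$ lie in $B(p,R_0)$, hence only at $p$, and $M\cong\R^n$. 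The ``in particular'' assertion is the special case $X(p_0)=0$ for some $p_0$: then $R_0=\pi/\varepsilon$ at that basepoint and $\operatorname{inj}(M)\ge\pi/\varepsilon$ gives $\operatorname{inj}_{p_0}(M)\ge R_0$.

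For the estimate I would reuse the proof of Theorem \ref{Thm:Diam}, whose engine is the pointwise identity along a geodesic $\gamma$: with $h(t)=\langle X(\gamma(t)),\gp(t)\rangle$ one has $h'(t)=\tfrac12(L_Xg)(\gp,\gp)=\Ric_X(\gp,\gp)-\Ric(\gp,\gp)$, so that the substitution $\Ric(\gp,\gp)=\Ric_X(\gp,\gp)-h'(t)$ makes both halves of (\ref{RicPinch}) usable along $\gamma$. Feeding this into the Myers/index-form argument — equivalently, into the weighted Laplacian comparison for $r$, where $h$ is absorbed into $\Delta_X r=\Delta r-h$ — the lower bound $\Ric_X\ge\varepsilon(n-1)g$ produces the $\varepsilon$ and the $(n-1)\pi$ in $R_0$, while the upper bound $\Ric\le(n-1)g$ produces the one-sided control of $h'$ needed to close the estimate; the only surviving trace of $X$ is its value at the basepoint, $\langle X(p),\gp(0)\rangle$ (entering either as a boundary term after an integration by parts or as the initial datum of the comparison ODE), and $|\langle X(p),\gp(0)\rangle|\le|X(p)|$ accounts for the $|X(p)|$ in $R_0$. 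Once $\ell\ge R_0$, this forces $\gamma(\ell)$ to admit a direction — common to all minimizing geodesics from $p$ to $\gamma(\ell)$ — along which $d(p,\cdot)$ strictly decreases, so $\gamma(\ell)$ is not critical by the Grove--Shiohama criterion. I expect the main obstacle to be precisely this bookkeeping: organizing the computation so that only the basepoint size $|X(p)|$, and not the a priori uncontrolled size of $X$ along $\gamma$, enters the bound — that is what makes $R_0$ the sharp constant. The remaining passages, from the analytic inequality to non-criticality and from there to the topological conclusion, are standard.
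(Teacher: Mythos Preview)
Your approach is essentially the paper's: use the injectivity-radius hypothesis to rule out Grove--Shiohama critical points of $r$ inside $B(p,R_0)$, use the integrated estimate (the paper's Lemma~\ref{Lem:Crit}, built on Lemma~\ref{IntRicci}) to rule them out for $d(p,q)>R_0$, and then invoke the standard fact that a complete manifold whose distance function from $p$ has no critical points is diffeomorphic to $\R^n$. One small correction to your bookkeeping: the $(n-1)\pi$ in $R_0$ comes from the \emph{upper} bound $\Ric\le(n-1)g$ via the second-variation inequality with the sine test function (this is Lemma~\ref{IntRicci}), not from the lower bound on $\Ric_X$; the lower bound contributes the $(n-1)\varepsilon\,d(p,q)$ term after integrating $\Ric_X(\gp,\gp)\ge(n-1)\varepsilon$ along $\gamma$, and the upper bound does not ``control $h'$'' but rather bounds $\int_0^\ell\Ric(\gp,\gp)\,dt$ directly.
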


We consider these results  gap theorems because Example \ref{Gaussian} shows that there are spaces with  infinite diameter and injectivity radius such that for every $\varepsilon$ there is  a vector field $X$ with a zero satisfying (\ref{RicPinch}).  Theorems \ref{Thm:Diam} and \ref{Thm:Inj} show that,  for $\varepsilon$ fixed, there can not be compact spaces satiisfying (\ref{RicPinch}) with arbitrarily large diameter or injectivity radius. 


$X$ will always have a zero when it is the gradient of some function $f$, see Corollary \ref{Cor:F}.   On the other hand, an odd dimensional sphere admits a non-trivial constant length  Killing field,  giving an example satisfying (\ref{RicPinch}) where $X$ does not have a zero. 

When applied to a  gradient shrinking Ricci soliton, these results give  bounds on the diameter and injectivity radius in terms of an upper bound on the Ricci curvature.  Since this appears to be a new result, we state it as a corollary. 

\begin{corollary} If $(M,g, f)$ is a complete gradient shrinking Ricci soliton,  $\mathrm{Ric}_f = \lambda g$,  with $\mathrm{Ric} \leq \rho$,  then either $\mathrm{inj}(M) < \frac{\pi \rho}{\lambda}$ or $M$ is diffeomorphic to $\mathbb{R}^n$.  If $M$ is,  in addition,  compact then $\mathrm{diam}(M) \leq \frac{2 \rho \pi}{\lambda}$.
\end{corollary}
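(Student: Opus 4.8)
The plan is to rescale the metric so that the two curvature bounds become an instance of (\ref{RicPinch}), and then quote Theorems~\ref{Thm:Diam} and~\ref{Thm:Inj} together with Corollary~\ref{Cor:F}.

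First I would dispose of the trivial case $\rho\le 0$: then $\mathrm{Hess} f=\lambda g-\mathrm{Ric}\ge\lambda g>0$, so $f$ is proper and strictly convex, whence $M$ is diffeomorphic to $\mathbb{R}^{n}$ (and in particular non-compact) and both assertions hold trivially. So assume $\rho>0$ and set $\tilde g:=\frac{\rho}{n-1}g$. Both $\mathrm{Ric}$ and, since the Levi-Civita connection is scale invariant, $\mathrm{Hess} f$ are unchanged as $(0,2)$-tensors when the metric is multiplied by a positive constant; hence the triple $(M,\tilde g,X)$ with $X=\nabla^{\tilde g}f$ has $\mathrm{Ric}_X=\mathrm{Ric}+\frac12 L_X\tilde g=\mathrm{Ric}+\mathrm{Hess} f=\mathrm{Ric}_f$. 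Rewriting the hypotheses $\mathrm{Ric}_f=\lambda g$ and $\mathrm{Ric}\le\rho g$ in terms of $\tilde g$ gives $\mathrm{Ric}_X=\frac{\lambda(n-1)}{\rho}\tilde g$ and $\mathrm{Ric}\le(n-1)\tilde g$, so $(M,\tilde g)$ with $X$ satisfies (\ref{RicPinch}) with pinching constant $\varepsilon=\lambda/\rho>0$.

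Since $X$ is a gradient field, Corollary~\ref{Cor:F} applied to $(M,\tilde g)$ produces a point $p$ with $X(p)=0$. Applying the ``in particular'' clauses of Theorem~\ref{Thm:Inj} and, when $M$ is compact, Theorem~\ref{Thm:Diam} to the triple $(M,\tilde g,X)$ then gives: either $\mathrm{inj}_{\tilde g}(M)<\pi/\varepsilon$ or $M$ is diffeomorphic to $\mathbb{R}^{n}$, and $\mathrm{diam}_{\tilde g}(M)\le 2\pi/\varepsilon$ in the compact case. Since passing from $g$ to $\tilde g$ multiplies all distances---and hence $\mathrm{inj}$ and $\mathrm{diam}$---by $\sqrt{\rho/(n-1)}$, translating these inequalities back to $g$ and substituting $\varepsilon=\lambda/\rho$ yields the claimed bounds.

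I expect no real difficulty: the content lies entirely in Theorems~\ref{Thm:Diam}, \ref{Thm:Inj} and Corollary~\ref{Cor:F}, and the corollary merely repackages them by the scaling $\tilde g=\frac{\rho}{n-1}g$. The one step to watch is the appeal to Corollary~\ref{Cor:F}: one must be sure it does supply a zero of $\nabla f$ in this a priori non-compact situation, which for a complete gradient shrinking soliton ultimately comes down to $f$ attaining its infimum.
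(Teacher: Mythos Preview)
Your approach is precisely the one the paper intends: the corollary is stated without proof, and the paper explicitly says ``$X$ will always have a zero when it is the gradient of some function $f$, see Corollary~\ref{Cor:F}''---so the argument is meant to be: rescale into the form~(\ref{RicPinch}), invoke Corollary~\ref{Cor:F} (compact case being trivial) to get a zero of $X$, then quote the ``in particular'' clauses of Theorems~\ref{Thm:Diam} and~\ref{Thm:Inj}. You have reconstructed this correctly.

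One arithmetic point deserves a closer look. In your last step you assert that undoing the scaling $\tilde g=\tfrac{\rho}{n-1}g$ yields exactly the bounds $\pi\rho/\lambda$ and $2\pi\rho/\lambda$. But with $\varepsilon=\lambda/\rho$ the $\tilde g$-bounds are $\pi/\varepsilon=\pi\rho/\lambda$ and $2\pi\rho/\lambda$, while distances in $g$ differ from those in $\tilde g$ by the factor $\sqrt{(n-1)/\rho}$; carrying this through gives
\[
\mathrm{inj}_g(M)<\frac{\pi\sqrt{(n-1)\rho}}{\lambda},\qquad \mathrm{diam}_g(M)\le \frac{2\pi\sqrt{(n-1)\rho}}{\lambda},
\]
not the constants printed in the corollary (note also that $\pi\rho/\lambda$ is dimensionless, whereas a length is expected). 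This is not a flaw in your method---your rescaling and application of the theorems are sound---but rather indicates that the constants in the corollary as stated are off by this square-root factor. You should either record the corrected constants or note explicitly that the stated ones hold only under the normalization $\rho=n-1$ implicit in~(\ref{RicPinch}).
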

Recently Munteanu and Wang have also established an upper bound on the diameter in terms of the injectivity radius of a gradient Ricci soliton \cite{MunteanuWang2}. Proposition 2.5 of  \cite{Weber} also  gives an upper bound on the diameter of a compact  Ricci soliton in terms of an upper bound on the scalar curvature along with a bound on the constant $C_1 = |\nabla f|^2 + 2 \lambda f + \mathrm{scal}$ where $f$ is the assumed to be normalized so that $\int_M f = 0$.    There is also a universal lower bound on the diameter of a non-Einstein shrinking gradient Ricci soliton \cite{Futaki}. 

 In order to obtain finer topological information about curvature pinching, we consider a corresponding version of sectional curvature pinching.  While weighted Ricci curvature has been developed extensively, concepts of weighted sectional curvature have been far less studied.     In \cite{WylieII} the author introduced two notions of weighted sectional curvature and they have been subsequently studied in \cite{KennardWylie}.  The notion of weighted sectional curvature we will use in this paper is the following. 
 
 \begin{definition} Suppose $(M,g)$ is a Riemannian manifold and $X$ is a smooth vector field on $M$,  we say that $(M,g,X)$ has a weighted sectional curvature lower bound $\varepsilon$ and write $\sec_X \geq \varepsilon$ if 
  for every two-plane $\sigma$ in $T_pM$ we have 
\[ \mathrm{sec}(\sigma) \geq \varepsilon - \frac12 L_X(U,U) \qquad \forall U \in \sigma \quad  |U|=1 \]
\end{definition}

Note that  if $\sec_X \geq \varepsilon$, then $\mathrm{Ric}_{(n-1)X} \geq (n-1) \varepsilon$. Moreover, from the discussion above  we see that if a compact manifold satisfies $\sec_X \geq \varepsilon$ and $\sec \leq 1$ then $\varepsilon \leq 1$ and $\varepsilon = 1$ if and only if $(M,g)$ has constant curvature and $X$ is a Killing field and that a complete space with  $\sec_X > 1$ and  $\sec \leq 1$ is diffeomorphic to $\mathbb{R}^n$.  For examples of metrics with $\sec_X > 0$ but with some negative curvatures, see \cite{KennardWylie}. 

 The most famous pinching theorem in Riemannian geometry is the quarter-pinched sphere theorem which states that a  simply connected manifold with $1/4 < \sec \leq 1$  is diffeomorphic  to the sphere.  We are interested in whether there is a similar pinching phenomenon for simply connected manifolds satisfying the condition $\sec_X \geq \varepsilon$ and $\sec \leq 1$.  Note that this class of spaces includes the sphere and Euclidean space for all $\varepsilon \leq 1$.   We  prove a classification up to homotopy equivalence for $\varepsilon > 1/2$.       

\begin{theorem} \label{Thm:Sec} Suppose that $(M,g)$ is a simply connected Riemannian manifold and $X$ is a vector field which admits a zero and so that $\sec_X \geq \varepsilon > 1/2$ and $\mathrm{sec} \leq 1$.
\begin{enumerate}
\item If $M$ is compact then $M$ is homeomorphic to a sphere. 
\item If $M$ is complete and non-compact then $M$ is contractible. 
\end{enumerate}
\end{theorem}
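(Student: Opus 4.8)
The plan is to adapt the distance-function/critical-point technique that drives Proposition \ref{Prop_Euclidean} and Theorem \ref{Thm:Inj}, but now exploiting the stronger pointwise sectional hypothesis to gain control on the \emph{index} of geodesics rather than merely on their length. Let $p$ be a zero of $X$. The first step is to establish a weighted Rauch/Morse-index comparison: along a unit-speed geodesic $\gamma$ emanating from $p$, the condition $\sec_X \geq \varepsilon$ together with $X(p)=0$ should force the existence of a conjugate point before a definite distance, and more importantly should give a lower bound on the index of any geodesic of length $\ell$ in terms of $\ell$. Concretely, I would run the standard second-variation argument using as comparison vector fields the parallel-transported frame multiplied by $\sin(\sqrt{\varepsilon}\, t)$ (or the appropriate function), and integrate by parts to move the $L_X$ term onto the boundary; since $\gamma(0)=p$ is a zero of $X$, the boundary term at $0$ vanishes, and at the far endpoint one uses $\sec \leq 1$ only where needed to control the other boundary contribution. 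The upshot should be that a geodesic from $p$ of length $\ge \pi/\sqrt{\varepsilon}$ has large index, with the index growing so that once $\ell$ exceeds a threshold determined by $\varepsilon > 1/2$, the index exceeds $1$.

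The second step converts this index estimate into topology. In the \textbf{non-compact} case (part (2)), if $M$ were not contractible one gets, via the standard sphere-theorem machinery (or just Morse theory on the loop space / the soul-type argument), a geodesic loop or a critical point of $d_p$ that is "essential"; the index bound from Step 1 shows such a configuration cannot be too short, while completeness plus the diameter-type control from the weighted Myers argument (as in Theorem \ref{Thm:Diam}, applied on geodesics from the zero $p$) shows it cannot be too long either — but here, crucially, in the non-compact case there is no compactness forcing a loop to exist, so one concludes via a Morse-theory argument that $d_p$ has no critical points outside $p$ beyond the conjugate radius, hence $M$ deformation retracts to $p$ and is contractible. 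In the \textbf{compact} case (part (1)), I would invoke the strategy of the Berger–Klingenberg proof of the sphere theorem: pick points $p, q$ realizing the diameter (taking $p$ to be the zero of $X$ if possible, or handling the general compact case by a separate injectivity-radius estimate), show every point lies in the union of two metric balls which are topological disks — this requires that geodesics from $p$ and from $q$ have no conjugate points up to the relevant radius, which is exactly what the $\varepsilon > 1/2$ pinching buys via Step 1 — and then conclude $M = D^n \cup_\phi D^n$ is homeomorphic to $S^n$. One must also rule out that the geodesic between $p$ and $q$ is too short using a weighted Klingenberg-type injectivity radius lemma.

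The main obstacle I anticipate is Step 1 in the compact case when the diameter-realizing points are \emph{not} zeros of $X$: the clean cancellation of the $L_X$ boundary term only happens at a zero of $X$, so controlling the index of a geodesic $\gamma$ with $\gamma(0) = p$ a zero but $\gamma(\ell) = q$ arbitrary requires estimating $L_X(\gp,\gp)$ at $q$, or equivalently $\langle \nabla_{\gp} X, \gp\rangle$ there. I would try to handle this by only ever using geodesics based at the zero $p$: cover $M$ by $B(p, r)$ and the complement, show $B(p,r)$ is a disk by the conjugate-point estimate, and show the complement is a disk by a second-variation/Morse-theory argument controlling critical points of $d_p$ — this is why the hypothesis is stated with "$X$ admits a zero." The secondary obstacle is the Klingenberg-type injectivity radius estimate in the weighted setting; I expect this follows from combining the conjugate radius bound from Step 1 with the usual closed-geodesic dichotomy, since the conjugate radius lower bound $\pi/\sqrt{\varepsilon} \leq \pi\sqrt 2$ is already encoded once $\varepsilon > 1/2$, and a short closed geodesic would contradict the index growth.
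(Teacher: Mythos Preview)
Your Step~1 has a genuine gap. With the test field $\phi E$ where $\phi(t)=\sin(\sqrt{\varepsilon}\,t)$, the second variation is
\[
\int_0^r (\phi')^2 - \phi^2\sec(\gp,E)\,dt
\;\le\;
\int_0^r (\phi')^2 - \varepsilon\phi^2\,dt \;+\; \int_0^r \phi^2\,\frac{d}{dt}g(X,\gp)\,dt,
\]
and integrating the last term by parts (even with $\phi$ vanishing at both ends) leaves $-\int_0^r 2\phi\phi'\,g(X,\gp)\,dt$, which you cannot control: it involves $g(X,\gp)$ along the \emph{entire} geodesic, not just at the endpoints. So ``move the $L_X$ term onto the boundary'' does not close, and neither the conjugate-point bound $\pi/\sqrt{\varepsilon}$ nor the index growth you want follows from this test function. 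The upper bound $\sec\le 1$ plays no role in this computation, which is a sign that the argument is not using the hypotheses correctly.

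The paper separates the two curvature bounds. First (Lemma~\ref{Lem:Index}), using the piecewise test function of Lemma~\ref{IntRicci} (equal to $1$ on the middle segment), one shows that $\sec\le 1$ together with $\int_0^r \sec(\gp,E)\,dt>\pi$ for every parallel $E$ forces $\mathrm{index}(\gamma)\ge n-1$. Second, the bound $\sec_X\ge\varepsilon$ is used only to estimate the \emph{unweighted} integral $\int_0^r\sec(\gp,E)\,dt$: since $\tfrac12 L_Xg(\gp,\gp)=\tfrac{d}{dt}g(X,\gp)$ appears here with weight $1$ (not $\phi^2$), it integrates exactly to a difference of boundary values. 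For a loop at the zero $p$ both boundary terms vanish and one gets $\int\sec\ge \varepsilon r$; for a geodesic from $p$ to a nearby point $q$ one picks up only $|X(q)|$, which is small. This is why the paper works in the loop space $\Omega_{p,p}$ (and $\Omega_{p,q}$ for $q$ near $p$) rather than with $d_p$.

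Consequently the global strategy also differs from yours. The paper does not use a two-disk decomposition. Instead: a Klingenberg-type argument (Theorem~\ref{Thm:Kling}, using the long homotopy lemma and the index estimate above applied to geodesics in $\Omega_{p,q}$ with $q$ close to $p$) gives $\mathrm{inj}_p\ge\pi$ when $\varepsilon>1/2$; then every nonconstant geodesic in $\Omega_{p,p}$ has length $>\pi/\varepsilon$ and hence index $\ge n-1$, so $M$ is $(n-1)$-connected. In the compact case this yields a homotopy sphere, and the homeomorphism statement is obtained by invoking the Poincar\'e conjecture; in the non-compact case $(n-1)$-connectedness plus $H_n(M)=0$ gives contractibility directly. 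Your anticipated obstacle about geodesics not based at a zero of $X$ is real, and the paper sidesteps it precisely by never leaving $\Omega_{p,p}$ (or $\Omega_{p,q}$ with $q$ arbitrarily close to $p$).
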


The quarter-pinched sphere theorem has a long  history.  The homeomorphism classification goes back to the 60s and is due to Berger \cite{Berger2}  and Klingenberg \cite{Kling} while the diffeomorphism classification was established using Ricci flow techniques recently by Brendle and Schoen \cite{BrendleSchoen}.   The proof of Theorem \ref{Thm:Sec} follows from adapting the classical arguments  of Berger and Klingenberg.  In fact, the only tool we use is Morse theory for the energy functional, as we do not even have triangle comparison results a la Alexandrov for $\sec_X$.     Because of this,  we can only show classification up to homotopy equivalence.  In the compact case, the resolution of the Poincare conjecture then gives the homeomorphism classification.  On the other hand, there are contractible manifolds which are not homeomorphic to $\mathbb{R}^n$, so a homeomorphism classification can not follow from topological arguments in the non-compact case.    We expect, particularly in the noncompact case, that Theorem \ref{Thm:Sec} can be improved with further study of the weighted sectional curvature $\sec_X$.  


The paper is organized as follows.  In the next section we prove the results about Ricci pinching and review the concept of critical points of the distance function.  In section three we discuss the examples showing the results from section 2 are optimal.  In section 4 we discuss Theorem \ref{Thm:Sec}.  
 
 \section{Ricci pinching and critical points of the distance function}
 
The starting point for all of  results of this note is the following estimate  involving the second variation of energy formula.  It is by no means new and similar results were used, for example, by Hamilton in studying the change in the Riemannian distance along   the Ricci flow (See Section 17 of \cite{Hamilton}). Also see  Proposition 1.94 of \cite{Chowetc}.

\begin{lemma} \label{IntRicci}
Suppose $(M,g)$ is a Riemannian manifold  and $\gamma:[0,r] \rightarrow M$ is a unit speed minimizing geodesic  of length greater than or equal to $\pi$.  Suppose that  $\mathrm{Ric} \leq (n-1)g$ on $B\left(\gamma(0), \frac{\pi}{2}\right)$ and $B\left(\gamma(r), \frac{\pi}{2}\right)$   Then 
\[ \int_0^r \mathrm{Ric}(\gp, \gp) dt \leq (n-1)\pi \]
\end{lemma}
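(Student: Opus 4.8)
The plan is to run the classical second-variation-of-energy argument, but with a test function tuned so that the Ricci \emph{upper} bound is only needed near the two endpoints of $\gamma$. First note that the hypothesis gives $r \ge \pi$, so we may define the Lipschitz, piecewise smooth function
\[
\phi(t) = \begin{cases} \sin t, & 0 \le t \le \tfrac{\pi}{2}, \\ 1, & \tfrac{\pi}{2} \le t \le r - \tfrac{\pi}{2}, \\ \sin(r-t), & r - \tfrac{\pi}{2} \le t \le r, \end{cases}
\]
which vanishes at $t = 0$ and $t = r$. Choose a parallel orthonormal frame $E_1, \dots, E_{n-1}$ along $\gamma$ with each $E_i \perp \gp$, and set $V_i = \phi E_i$.

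Since $\gamma$ is a minimizing geodesic, the index form
\[
I(V,V) = \int_0^r |\nabla_{\gp} V|^2 - \langle R(V,\gp)\gp, V\rangle \, dt
\]
is nonnegative on every piecewise smooth $V$ vanishing at the endpoints, in particular on each $V_i$. Because $E_i$ is parallel along $\gamma$, $\nabla_{\gp} V_i = \phi' E_i$, so $I(V_i, V_i) = \int_0^r (\phi')^2 - \phi^2 \langle R(E_i, \gp)\gp, E_i\rangle \, dt$. Summing over $i$ and using $\sum_i \langle R(E_i,\gp)\gp, E_i\rangle = \mathrm{Ric}(\gp,\gp)$, I obtain
\[
\int_0^r \phi^2 \, \mathrm{Ric}(\gp,\gp) \, dt \le (n-1)\int_0^r (\phi')^2 \, dt = (n-1)\frac{\pi}{2},
\]
where the last equality is the elementary computation $\int_0^{\pi/2}\cos^2 = \tfrac{\pi}{4}$ on each end interval, $\phi'$ being identically zero on the middle interval.

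Next I would estimate the remaining piece $\int_0^r (1 - \phi^2) \, \mathrm{Ric}(\gp,\gp)\,dt$. Its integrand is supported on $[0, \tfrac{\pi}{2}] \cup [r - \tfrac{\pi}{2}, r]$, where $1 - \phi^2$ equals $\cos^2 t$, respectively $\cos^2(r-t)$, and is therefore nonnegative. Because $\gamma$ is a unit-speed minimizing geodesic, $\gamma(t) \in B(\gamma(0), \tfrac{\pi}{2})$ for $t \in [0, \tfrac{\pi}{2})$ and $\gamma(t) \in B(\gamma(r), \tfrac{\pi}{2})$ for $t \in (r - \tfrac{\pi}{2}, r]$, so the hypothesis gives $\mathrm{Ric}(\gp,\gp) \le n-1$ there. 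Hence $\int_0^r (1 - \phi^2)\,\mathrm{Ric}(\gp,\gp)\,dt \le (n-1)\bigl(\int_0^{\pi/2}\cos^2 t\, dt + \int_0^{\pi/2}\cos^2 s\, ds\bigr) = (n-1)\tfrac{\pi}{2}$. Adding this to the previous bound gives $\int_0^r \mathrm{Ric}(\gp,\gp)\,dt \le (n-1)\pi$.

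The one subtlety worth flagging is that the Ricci upper bound is used \emph{only} on the two end segments, and only against the nonnegative weight $1 - \phi^2$: no lower bound on $\mathrm{Ric}(\gp,\gp)$ is assumed anywhere, and on the middle segment the full weight $\phi^2 = 1$ of the curvature term is absorbed by the nonnegativity of the index form. Verifying that $\gamma(t)$ lies in the stated balls is immediate from minimality, so I do not anticipate a real obstacle; the only content is the choice of $\phi$ that makes the two contributions sum to exactly $(n-1)\pi$.
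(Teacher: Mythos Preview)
Your proof is correct and follows essentially the same approach as the paper: both use the second variation inequality with the identical piecewise test function $\phi$, then split $\int \mathrm{Ric}(\gp,\gp)$ into the piece controlled by nonnegativity of the index form and the $(1-\phi^2)$-weighted piece on the two end segments where the Ricci upper bound applies. The only cosmetic difference is that you explicitly build the parallel frame and sum over $i$, whereas the paper writes the summed inequality $0 \le \int (n-1)(\phi')^2 - \phi^2\,\mathrm{Ric}(\gp,\gp)\,dt$ directly.
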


\begin{proof}
From the second variation or arclength formula, for a minimal geodesic we have 
\[ 0 \leq \int_0^r (n-1)(\phi')^2 - \phi^2 \mathrm{Ric}(\gp, \gp) dt \]
where $\phi$ is any function with $\phi(0) = \phi(r) = 0$.     Which implies that 
\[  \int_0^r \mathrm{Ric}(\gp, \gp) dt \leq \int_0^r (n-1)(\phi')^2+ (1 - \phi^2) \mathrm{Ric}(\gp, \gp) dt \]
Choose the function 
\[ \phi(t) = \left \{ \begin{array}{cc} \sin(t)  & 0 \leq t \leq \frac{\pi}{2} \\ \\1 & \frac{\pi}{2} \leq t \leq r - \frac{\pi}{2} \\ \\ -\sin(t-r)  & r- \frac{\pi}{2}\leq t \leq r \end{array} \right. \]
Then elementary calculation yields
 \begin{eqnarray*}
\int_0^r \mathrm{Ric}(\gp, \gp) dt   &\leq& (n-1)\left(  \int_0^{\pi/2}  1+ (\phi')^2 - \phi^2  dt   +  \int_{r-\frac{\pi}{2}}^{r}  1+ (\phi')^2 - \phi^2 dt\right) \\
  Ê &=& (n-1) \left(\int_0^{\pi/2} 1 + \cos^2(t) - \sin^2(t) dt + \int_{r-\frac{\pi}{2}}^{r} 1 + \cos^2(t-r) - \sin^2(t-r) dt\right)\\
&=& (n-1) \pi
\end{eqnarray*}
 \end{proof}
 
Under our Ricci pinching assumption this gives us the following estimate. 
 
 \begin{lemma} \label{Lem:Crit}  Let $(M,g)$ be a complete Riemannian manifold satisfying  (\ref{RicPinch}). Suppose that  $p,q \in M$ and let $\gamma$ be a unit speed minimal geodesic from $p$ to $q$, then 
 \[ g(X(q), \dot{\gamma}(d(p,q))) \geq -(n-1) \pi -|X(p)| + (n-1) \varepsilon d(p,q) \]
 \end{lemma}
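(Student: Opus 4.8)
The plan is to integrate the Bakry--Emery lower bound along $\gamma$ and turn the $L_X g$ contribution into a boundary term. Set $r = d(p,q)$. Since $\gamma$ is a geodesic we have $\nabla_{\gp}\gp = 0$, so along $\gamma$
\[ \frac{d}{dt}\, g\bigl(X(\gamma(t)),\gp(t)\bigr) = g(\nabla_{\gp}X,\gp) = \tfrac12 (L_X g)(\gp,\gp), \]
and therefore
\[ \mathrm{Ric}_X(\gp,\gp) = \mathrm{Ric}(\gp,\gp) + \frac{d}{dt}\, g\bigl(X(\gamma(t)),\gp(t)\bigr). \]
Integrating over $[0,r]$ and using $\mathrm{Ric}_X \geq (n-1)\varepsilon g$ from (\ref{RicPinch}) gives
\[ (n-1)\varepsilon\, r \;\leq\; \int_0^r \mathrm{Ric}(\gp,\gp)\,dt \;+\; g(X(q),\gp(r)) \;-\; g(X(p),\gp(0)). \]

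Next I would control the two terms on the right that do not already appear in the conclusion. By the Cauchy--Schwarz inequality $-g(X(p),\gp(0)) \leq |X(p)|$, since $|\gp(0)| = 1$. For the integral I would argue that $\int_0^r \mathrm{Ric}(\gp,\gp)\,dt \leq (n-1)\pi$ regardless of the size of $r$: if $r \geq \pi$ this is precisely Lemma \ref{IntRicci}, whose hypothesis that $\mathrm{Ric} \leq (n-1)g$ near the endpoints of $\gamma$ is a consequence of (\ref{RicPinch}); if $r < \pi$ then simply $\int_0^r \mathrm{Ric}(\gp,\gp)\,dt \leq (n-1)r < (n-1)\pi$ directly from the upper Ricci bound. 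Substituting both estimates yields
\[ (n-1)\varepsilon\, r \;\leq\; (n-1)\pi + |X(p)| + g(X(q),\gp(r)), \]
and rearranging gives the stated inequality.

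I do not anticipate a genuine obstacle: the argument is a one-line weighted second variation computation followed by Lemma \ref{IntRicci}. The only points requiring care are invoking Lemma \ref{IntRicci} solely in the regime where $\gamma$ has length at least $\pi$ and disposing of the short-geodesic case by hand, and keeping the orientation of $\gamma$ (from $p$ to $q$) consistent with the sign of the boundary term $g(X(q),\gp(r))$ appearing in the statement.
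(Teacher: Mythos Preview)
Your argument is correct and follows the paper's proof essentially line for line: integrate $\mathrm{Ric}_X \geq (n-1)\varepsilon g$ along $\gamma$, turn the $L_Xg$ term into a boundary difference, and bound the Ricci integral by $(n-1)\pi$ via Lemma~\ref{IntRicci}. Your explicit treatment of the short case $r<\pi$ is in fact a small improvement, since Lemma~\ref{IntRicci} is stated only for geodesics of length at least $\pi$ and the paper's proof does not separate this case.
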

 
 \begin{proof}
 We have 
 \[ \frac12 L_X g(\dot{\gamma}, \dot{\gamma}) = g( \nabla_{\dot{\gamma}} X, \dot{\gamma} ) = \frac{d}{dt} g(X, \dot{\gamma}) \]
 where $\frac{d}{dt}$ denote derivative along the geodesic. If we integrate the equation $\mathrm{Ric} + \frac12 L_X g \geq  \varepsilon(n-1) g$ along the geodesic we obtain, 
 \[ \int_0^{d(p,q)} \mathrm{Ric}(\dot{\gamma}, \dot{\gamma}) dt  + g(X(q), \dot{\gamma}(d(p,q))) - g(X(p), \dot{\gamma}(0)) \geq (n-1)\varepsilon d(p,q) \]
 Applying Lemma \ref{IntRicci} then gives us the formula. 
 \[ g(X(q), \dot{\gamma}(d(p,q))) \geq -(n-1) \pi -|X(p)| + (n-1) \varepsilon d(p,q) \]
 \end{proof}
 
 Two corollaries of this formula that we will find useful are the following.  The second is already stated in \cite{Fang} and also appears implicitly as part of the proof of  Lemma 1.2 of \cite{Per2}.  We include  the proofs for completeness. 
 
 \begin{corollary} \label{Cor:X} If $(M,g)$ is compete and non-compact and satisfies (\ref{RicPinch}) then $|X| \rightarrow \infty$ at infinity. \end{corollary}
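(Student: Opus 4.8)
The plan is to read the statement off directly from Lemma~\ref{Lem:Crit} together with Cauchy--Schwarz. First I would fix an arbitrary basepoint $p \in M$. Given any other point $q \in M$, completeness and the Hopf--Rinow theorem guarantee a unit speed minimizing geodesic $\gamma$ from $p$ to $q$, so Lemma~\ref{Lem:Crit} applies and yields
\[ g(X(q), \dot{\gamma}(d(p,q))) \geq -(n-1)\pi - |X(p)| + (n-1)\varepsilon\, d(p,q). \]
Since $\dot{\gamma}(d(p,q))$ is a unit vector, Cauchy--Schwarz gives $|X(q)| \geq g(X(q), \dot{\gamma}(d(p,q)))$, and therefore
\[ |X(q)| \geq (n-1)\varepsilon\, d(p,q) - (n-1)\pi - |X(p)|. \]
Here $(n-1)\varepsilon > 0$ and $|X(p)|$ is a fixed finite constant, so the right-hand side tends to $+\infty$ as $d(p,q) \to \infty$.

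To conclude, I would unwind what ``$|X| \to \infty$ at infinity'' means: for every sequence $(q_i)$ that eventually leaves every compact subset of $M$, we need $|X(q_i)| \to \infty$. Since $M$ is complete and non-compact, such a sequence necessarily satisfies $d(p,q_i) \to \infty$, and the displayed lower bound then forces $|X(q_i)| \to \infty$. Equivalently, given $R > 0$, the set $\{\, q : |X(q)| \leq R \,\}$ is contained in the closed ball of radius $\big(R + (n-1)\pi + |X(p)|\big)/((n-1)\varepsilon)$ about $p$, which is compact by completeness; this is exactly the assertion.

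There is essentially no obstacle in this argument — it is a one-line consequence of Lemma~\ref{Lem:Crit}. The only points requiring a moment's care are the existence of the minimizing geodesic used to invoke the lemma (this is where completeness enters, via Hopf--Rinow) and the positivity of the constant $(n-1)\varepsilon$, which is built into the pinching hypothesis~(\ref{RicPinch}).
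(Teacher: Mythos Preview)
Your proposal is correct and follows essentially the same route as the paper: fix a basepoint, apply Lemma~\ref{Lem:Crit}, and bound $|X(q)|$ below by the inner product $g(X(q),\dot\gamma(d(p,q)))$. The paper's proof is a single sentence that leaves the Hopf--Rinow and Cauchy--Schwarz steps implicit; you have simply spelled these out.
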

 
 \begin{proof}
 Fix $p$ and let $q \rightarrow \infty$, then we obtain that $g(X(q), \dot{\gamma}(d(p,q))) \rightarrow \infty$ so that in particular, $|X| \rightarrow \infty$. 
 \end{proof}
 
 \begin{corollary} \label{Cor:F} If $(M,g)$ is compete and non-compact satisfying  (\ref{RicPinch}) then $f$ grows at least quadratically with the distance to any point.  In particular, $f$ has at least one critical point as it obtains its minimum. 
 \end{corollary}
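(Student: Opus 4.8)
The plan is to integrate the derivative bound of Lemma \ref{Lem:Crit} along minimizing geodesics issuing from a fixed point, using that here $X=\nabla f$ and hence $g(X,\dot{\gamma})=\frac{d}{dt}(f\circ\gamma)$.

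Fix $p\in M$ and write $c_0=|\nabla f(p)|<\infty$. Given $q$ with $L:=d(p,q)\ge\pi$, pick a unit speed minimal geodesic $\gamma\colon[0,L]\to M$ from $p$ to $q$; for each $t\in[\pi,L]$ the segment $\gamma|_{[0,t]}$ is still a unit speed minimal geodesic from $p$ to $\gamma(t)$ of length at least $\pi$, so Lemma \ref{Lem:Crit} gives
\[
\frac{d}{dt}\big(f\circ\gamma\big)(t)=g\big(\nabla f(\gamma(t)),\dot{\gamma}(t)\big)\ \ge\ (n-1)\varepsilon\,t-(n-1)\pi-c_0 .
\]
Integrating this from $\pi$ to $L$, and using that $f(\gamma(\pi))\ge c_1:=\min_{\overline{B}(p,\pi)}f$, which is finite because $\overline{B}(p,\pi)$ is compact, yields
\[
f(q)\ \ge\ \frac{(n-1)\varepsilon}{2}\big(L^{2}-\pi^{2}\big)-\big((n-1)\pi+c_0\big)(L-\pi)+c_1 ,
\]
that is, $f(q)\ge \frac{(n-1)\varepsilon}{2}\,d(p,q)^{2}-C_1\,d(p,q)-C_2$ for constants $C_1,C_2$ depending only on $n,\varepsilon$ and $p$. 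This is the asserted at-least-quadratic growth.

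In particular $f(q)\to+\infty$ as $d(p,q)\to\infty$, so $f$ is bounded below and every sublevel set $\{f\le c\}$ is bounded; being closed in the complete manifold $M$ it is compact by Hopf--Rinow, so $f$ attains its minimum at some point $x_0$. Then $\nabla f(x_0)=0$, so $X=\nabla f$ has a zero. The one point requiring care is the length hypothesis in Lemma \ref{IntRicci} (equivalently in Lemma \ref{Lem:Crit}): it is why the integration begins at $t=\pi$ rather than $t=0$, with the behavior of $f$ on $\overline{B}(p,\pi)$ absorbed into the constants; everything else is routine.
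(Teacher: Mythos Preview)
Your proof is correct and follows the same route as the paper: use Lemma~\ref{Lem:Crit} to bound $\frac{d}{dt}(f\circ\gamma)$ from below by an affine function of $t$, then integrate to get quadratic growth and hence a minimum. The paper's version is just a terser sketch of the same argument.

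One small remark: your care in starting the integration at $t=\pi$ is harmless but unnecessary. Although Lemma~\ref{IntRicci} is stated for geodesics of length at least $\pi$, under the global bound $\mathrm{Ric}\leq (n-1)g$ from (\ref{RicPinch}) the conclusion $\int_0^r \mathrm{Ric}(\dot\gamma,\dot\gamma)\,dt \leq (n-1)\pi$ holds trivially for $r<\pi$ as well, so Lemma~\ref{Lem:Crit} is valid for all $d(p,q)$ and you may integrate from $0$. This is presumably why the paper does not impose the restriction.
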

 \begin{proof}
 When $X = \nabla f$, for any geodesic $\gamma(t)$ we have 
 \begin{eqnarray*}
 \frac{d}{dt} \left( f(\gamma(t)) \right)  &=& g(X(\gamma(t)), \dot{\gamma}(t)) \\
 &\geq& -(n-1) \pi -|X(\gamma(0))| + (n-1) \varepsilon t
 \end{eqnarray*}
 Integrating the equation along $\gamma$ then gives the result. 
 \end{proof}
 
 \begin{remark} In the case of a gradient Ricci soliton  Cao-Zhou  show that $f$ grows exactly quadratically \cite{Cao-Zhou}.  \end{remark}

In order to obtain topological results from these formulae we will use the theory of critical points of the distance function which was pioneered by Grove-Shiohama in  \cite{GS}.  There are many surveys of the theory.  For completeness  we will review the main elements and the refer to reader, for example,  to Chapter 11 of \cite{Petersen} for a more complete treatment. 

  Recall that for a smooth function $f:M \rightarrow \mathbb{R}$, the critical points are the points where $df = 0$.  If $f$ is  proper and  there are no critical points in $f^{-1}([a,b])$ one of the foundational lemmas of Morse theory says that  $f^{-1}((-\infty, b])$ deformation  retracts onto  $f^{-1}((-\infty, a])$.  Thus, in the case $X = \nabla f$,  Lemma \ref{Lem:Crit} already shows that $f$ is proper and has no critical points outside of some compact set.  

Let $p \in M$ and consider the distance function $r$ to $p$. $r$  is smooth almost everywhere with unit gradient so it does not have critical points in the traditional sense. The approach to defining  critical points for $r$  is to view critical points as  the obstruction to producing a deformation retraction between sub-levels.      At the points where $r$ is smooth, $\nabla r$ is the unit tangent vector to the unique  minimal geodesic from $p$ to the point.   At the points where $r$ is not smooth, there may be multiple minimal geodesics  from $p$ to the point.  Intuitively, it still might be possible to deformation retract past these points if all these geodesics point in roughly the same direction.   Indeed,  it turns out all we need to build the retract is that all of  the tangent vectors of the  minimal geodesics from $p$ to $q$ to  lie in some  half space of $T_qM$.   This gives the following definition of a critical point. 

 \begin{definition} Fix $p\in M$, a point $q$ is a  critical point of the distance function to $p$ (is critical to $p$) if for every vector $V \in T_q M$ there is a minimal geodesic $\gamma$ with $\gamma(0) = p$, $\gamma(d(p,q)) = q$ such that  $g(\dot{\gamma}(d(p,q)), V) \leq0$. 
 \end{definition}
 
 In all of the results of this paper, we'll show that certain points $q$ are not critical to $p$ by showing that for every minimal geodesic from $p$ to $q$, $g(\dot{\gamma}(d(p,q)), X(q)) > 0$.  In particular, our arguments to do not have an analogue  in the classical case where $X = 0$. 
 
 As alluded to in the discussion above, we have the following topological lemma about critical points of the distance function. See page 337 of \cite{Petersen}. 
 
 \begin{lemma} Suppose that there are no critical points of the distance function to $p$ in the annulus $ \{ q : a \leq d(p,q) \leq b \}$. 
 Then $B(p, a)$ is homeomorphic to $B(p,b)$ and $B(p,b)$ deformation retracts onto $B(p,a)$.  Moreover, if there are no critical points of $p$ in $M$ then $M$ is diffeomorphic to $\mathbb{R}^n.$
 \end{lemma}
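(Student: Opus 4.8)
This is the basic isotopy lemma of Grove--Shiohama's critical-point theory for distance functions \cite{GS}, and the plan is simply to reproduce its standard proof (cf.\ Chapter~11 of \cite{Petersen}): construct a ``gradient-like'' vector field for $r = d(p,\cdot)$ on a neighborhood of the annulus and flow along it. For $q \in M$ write $\Gamma_q \subset T_qM$ for the compact set of unit tangent vectors at $q$ to minimal geodesics from $p$ to $q$, so that $q$ fails to be critical to $p$ precisely when $\Gamma_q$ is contained in an open half-space $\{\, w : g(w,V) > 0 \,\}$ of $T_qM$ for some nonzero $V$.

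First I would produce local fields. Fixing $q$ with $a \le r(q) \le b$: since $q$ is not critical there is a unit vector $V_q \in T_qM$ with $g(v, V_q) > 0$ for every $v \in \Gamma_q$. Because a limit of minimal geodesics from $p$ is again minimal, the assignment $q' \mapsto \Gamma_{q'}$ is upper semicontinuous, so this strict inequality survives on a neighborhood: there are an open set $U_q \ni q$, a smooth unit vector field $W_q$ on $U_q$ (e.g.\ $V_q$ extended to be parallel, or constant in normal coordinates), and $c_q > 0$ with $g(v, W_q(q')) \ge c_q$ for all $q' \in U_q$ and $v \in \Gamma_{q'}$.

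Next I would globalize and flow. Assume first $b < \infty$, so $A = \{\, a \le r \le b \,\}$ is compact; cover it by finitely many $U_{q_i}$, take a subordinate partition of unity $\{\rho_i\}$, and set $W = \sum_i \rho_i W_{q_i}$. Then $g(v, W(q')) > 0$ for all $q'$ near $A$ and all $v \in \Gamma_{q'}$, and on the compact set $A$ the positive function $q' \mapsto \min_{v \in \Gamma_{q'}} g(v, W(q'))$ is bounded below by a positive constant, so after multiplying $W$ by a positive function I may assume this minimum is $\ge 1$ on $A$. The first-variation-of-arclength estimate — for any $C^1$ curve $\sigma$, $\limsup_{h \to 0^+} \tfrac{r(\sigma(t+h)) - r(\sigma(t))}{h} \le \min_{v \in \Gamma_{\sigma(t)}} g(v, \dot\sigma(t))$ — then shows that along the flow $\phi_s$ of $-W$ the Lipschitz function $s \mapsto r(\phi_s(q))$ has upper Dini derivative $\le -1$ as long as it stays in $A$, hence decreases at unit rate and reaches $\{r = a\}$ in time at most $b - a$; symmetrically, the flow of $+W$ increases $r$ at unit rate.

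Finally I would read off the conclusions. Flowing each point of $B(p,b)$ along $-W$ and stopping it the first time it reaches $\{r = a\}$ (points already in $B(p,a)$ being only nudged by the cutoff part of $W$ and, with the usual cutoff bookkeeping, remaining inside) gives a deformation retraction of $B(p,b)$ onto $B(p,a)$; the same flow trivializes the annulus as $\{r = a\} \times [a,b]$, so $B(p,a)$ and $B(p,b)$ are homeomorphic. For the last assertion, pick $0 < \delta < \mathrm{inj}_p(M)$ so that $\exp_p$ makes $\overline{B(p,\delta)}$ a smooth closed $n$-disc with smooth boundary sphere $\{r = \delta\}$; there are no critical points of $p$ on $\{r \ge \delta\}$, and $r$ is proper since $M$ is complete, so the normalized $+W$-flow on $\{r \ge \delta\}$ is complete with orbits issuing from $\{r = \delta\}$ exhausting $M$, while the $-W$-flow carries each point of $M \setminus B(p,\delta)$ back to $\{r = \delta\}$ in a finite time that depends smoothly on the point (implicit function theorem, $r$ being smooth near $\{r = \delta\}$). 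This yields a diffeomorphism $M \setminus B(p,\delta) \cong \{r = \delta\} \times [0,\infty) \cong S^{n-1} \times [0,\infty)$, and gluing this collar onto $\overline{B(p,\delta)}$ shows $M$ is diffeomorphic to $\mathbb{R}^n$. The one delicate point throughout is Steps~1--2: that non-criticality is an open condition which can be made uniform over the compact annulus (via upper semicontinuity of $q \mapsto \Gamma_q$), and that, since $r$ is merely locally Lipschitz, its monotonicity along the flow must be extracted from the first-variation inequality for one-sided derivatives rather than from an honest gradient.
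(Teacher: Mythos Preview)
The paper does not supply its own proof of this lemma; it simply states the result and refers the reader to page~337 of \cite{Petersen}. Your proposal is a correct and essentially complete account of precisely the standard Grove--Shiohama argument one finds in that reference (local gradient-like fields from non-criticality, partition-of-unity globalization over the compact annulus, monotonicity of $r$ along the flow via the first-variation inequality, and the resulting product/collar structure), so there is nothing further to compare.
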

 
\begin{remark} We can not get diffeomorphic sub-levels in general because the levels of $r$ will not in general be smooth.  This is not an issue when there are no critical values because we can retract down to a small neighborhood where the levels are smooth. 
\end{remark}

Our first application is Proposition \ref{Prop_Euclidean}. 

\begin{proof} [Proof of Proposition \ref{Prop_Euclidean}]
 From the assumptions we have  $L_X g >0$. From  the divergence theorem this implies the manifold is non-compact.  From Corollary \ref{Cor:X} we have $|X|^2 \rightarrow \infty$ at infinity which  implies that the function $\phi(p)= |X(p)|^2$ has a minimum somewhere on $M$.  At the minimum point we must have 
 \[ 0 = D_X \phi = D_X g(X,X) = 2 L_Xg(X,X), \]
 since $L_X g > 0$ this implies the minimum  of $\phi$ must be zero.  Let $\gamma(t)$ be a geodesic with $\gamma(0)=p$ a zero of $X$, then we obtain 
 \begin{eqnarray*}
 0 < \int_0^t L_X g(\dot{\gamma}, \dot{\gamma}) dt= g(X(\gamma(t)), \dot{\gamma}(t)) 
 \end{eqnarray*}
 which shows that there are no critical points to $p$ and thus the manifold is diffeomorphic to Euclidean space. 
\end{proof}

\begin{remark} We note that the last part of the proof of Proposition \ref{Prop_Euclidean} proves the following: \emph{If $M$ is a complete manifold supporting a vector field with a zero such that $L_X g >0$ then $M$ is diffeomorphic to Euclidean space.} We note that even in the gradient case this is slightly different than the result one would obtain from classical Morse theory as we do not need to  assume that   $f$ is proper but have added  the assumption that the  metric is complete.  On the other hand, the assumption that $X$ has a zero is necessary.  A simple example is to let  $g = dt^2 + e^{2t} g_N$ be the warped product metric on $\mathbb{R} \times N$ and define $f= e^t$,  which has  $\mathrm{Hess}_g f > 0$ . 
\end{remark}

More generally,  we have the following critical point estimate for metrics with pinched Ricci curvatures. 
\begin{lemma} \label{CritPoint} 
Let $(M,g)$ be a complete Riemannian manifold satisfying (\ref{RicPinch}).  Let $p \in M$, then if $q \in M$ with 
\[ d(p,q) >  \frac{(n-1) \pi + |X(p)|}{(n-1) \varepsilon}  \]
Then $q$ is not critical to $p$. 
\end{lemma}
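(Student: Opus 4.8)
The plan is to apply Lemma~\ref{Lem:Crit} directly and then unwind the definition of a critical point. Let $r = d(p,q)$ and let $\gamma:[0,r]\to M$ be an arbitrary unit speed minimal geodesic from $p$ to $q$. By Lemma~\ref{Lem:Crit} we have
\[
 g(X(q), \dot\gamma(r)) \geq -(n-1)\pi - |X(p)| + (n-1)\varepsilon\, r .
\]

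The first step is to observe that the hypothesis $r > \frac{(n-1)\pi + |X(p)|}{(n-1)\varepsilon}$ is exactly the statement that the right-hand side above is strictly positive: multiplying through by $(n-1)\varepsilon > 0$ gives $(n-1)\varepsilon\, r > (n-1)\pi + |X(p)|$, hence $-(n-1)\pi - |X(p)| + (n-1)\varepsilon\, r > 0$. Since $\gamma$ was an arbitrary minimal geodesic from $p$ to $q$, we conclude that $g(X(q), \dot\gamma(r)) > 0$ for \emph{every} minimal geodesic from $p$ to $q$; in particular $X(q) \neq 0$.

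The second step is to use this to contradict criticality. Recall $q$ is critical to $p$ if for every $V \in T_qM$ there is a minimal geodesic $\gamma$ from $p$ to $q$ with $g(\dot\gamma(r), V) \leq 0$. Taking $V = X(q)$, the previous step shows that $g(\dot\gamma(r), V) = g(\dot\gamma(r), X(q)) > 0$ for \emph{all} minimal geodesics from $p$ to $q$, so no such geodesic satisfies $g(\dot\gamma(r), V)\leq 0$. Hence the criticality condition fails for the vector $V = X(q)$, and $q$ is not critical to $p$.

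There is essentially no serious obstacle here; the only points requiring minor care are that the inequality in the distance hypothesis is strict (so that the lower bound in Lemma~\ref{Lem:Crit} is strictly positive, which is what the definition of critical point requires), and that one must remember to choose $V = X(q)$ as the witnessing vector rather than, say, $\dot\gamma(r)$ for a single $\gamma$, since there may be several minimal geodesics from $p$ to $q$ and we need one vector that works against all of them simultaneously.
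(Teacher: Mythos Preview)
Your proof is correct and follows exactly the same approach as the paper: apply Lemma~\ref{Lem:Crit}, observe that the distance hypothesis forces $g(X(q),\dot\gamma(d(p,q)))>0$ for every minimal geodesic from $p$ to $q$, and conclude that $q$ is not critical by taking $V=X(q)$. The paper's proof is terser but identical in content.
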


 \begin{proof} 
If 
 \[ d(p,q)> \frac{(n-1) \pi + |X(p)|}{(n-1) \varepsilon}  \]
 then by Lemma \ref{Lem:Crit},   $g(X(q), \dot{\gamma}(d(p,q))) >0$. Since this is true for all minimal geodesics $\gamma$ from $p$ to $q$, this shows that $q$ is not critical to $p$. 
\end{proof}

This immediately gives us Theorem  \ref{Thm_Topology}. 

\begin{proof} [Proof of Theorem  \ref{Thm_Topology}] 
Fix $p$.  Lemma \ref{CritPoint} tells us that there are no critical points to $p$ outside of a compact set.  Thus the manifold deformation retracts onto  a finite  open ball in $M$, showing it is homeomorphic to the interior of a manifold with boundary. 
\end{proof}

Now we consider the diameter estimate.  Unlike the results above, this result does not seem to have been observed before even in the gradient case.   The main ``new" ingredient is the following fact about critical points of the distance function due to Berger \cite{Berger2}. 
 
 \begin{lemma} \cite{Berger2} \label{BerCrit}  Let $(M,g)$ be a compact Riemannian manifold and fix $p\in M$.  Let $q$ be a point such that $d(p,q) = \max_{x \in M} d(p,x)$.  Then  $q$ is critical to $p$. 
 \end{lemma}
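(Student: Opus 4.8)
The plan is to run the classical ``escape'' argument of Berger \cite{Berger2}, in the form later systematized by Grove--Shiohama \cite{GS}: at a point of maximal distance the minimal geodesics cannot all point into one open half-space, because otherwise one could move away from $p$ and produce a point still farther from $p$.

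Set $\ell=d(p,q)=\max_{x\in M} d(p,x)$ and argue by contradiction, assuming $q$ is \emph{not} critical to $p$. By the definition of a critical point there is a unit vector $V\in T_qM$ with $g(\dot\gamma(\ell),V)>0$ for \emph{every} unit-speed minimal geodesic $\gamma\colon[0,\ell]\to M$ from $p$ to $q$. The set of such geodesics is compact: each is determined by its initial velocity, which ranges over a \emph{closed} subset of the unit sphere of $T_pM$, since an Arzel\`a--Ascoli limit of minimizing geodesics of the fixed length $\ell$ joining $p$ to $q$ is again one. Hence the continuous function $\gamma\mapsto g(\dot\gamma(\ell),V)$ attains a strictly positive minimum $\delta>0$ on it.

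Now consider the geodesic $c(s)=\exp_q(sV)$, $s\ge 0$ (defined for all $s$, since $M$ is compact, so complete). Note $\dot c(0)=V$. The first variation inequality for the distance function $r=d(p,\cdot)$ along $c$ gives
\[
\liminf_{s\to 0^+}\frac{d\big(p,c(s)\big)-\ell}{s}\ \ge\ \min_{\gamma}\, g\big(V,\dot\gamma(\ell)\big)\ \ge\ \delta\ >\ 0,
\]
the minimum being over all minimal geodesics $\gamma$ from $p$ to $q$. Therefore $d(p,c(s))>\ell$ for all sufficiently small $s>0$, contradicting the maximality of $d(p,\cdot)$ at $q$. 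This contradiction shows that $q$ must be critical to $p$.

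The first variation step can also be unwound by hand, which is instructive for locating the real difficulty: if $d(p,c(s_k))\le \ell$ for some $s_k\downarrow 0$, choose minimal geodesics $\sigma_{k}$ joining $p$ to $c(s_k)$ and extract (Arzel\`a--Ascoli again) a limit minimal geodesic $\sigma_\infty$ from $p$ to $q$; the broken path consisting of $\sigma_{k}$ followed by the reversal of $c|_{[0,s_k]}$ joins $p$ to $q$ with an \emph{obtuse} turning angle at $c(s_k)$, because $g(\dot\sigma_{k}(d(p,c(s_k))),-\dot c(s_k))\to -g(\dot\sigma_\infty(\ell),V)\le -\delta<0$; shortcutting near that corner gives a path from $p$ to $q$ of length at most $d(p,c(s_k))-\tfrac{\delta}{2}s_k+o(s_k)\le \ell-\tfrac{\delta}{2}s_k+o(s_k)<\ell$, which is impossible. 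The routine ingredients are the first variation formula and this shortcutting estimate; the one point deserving genuine care --- and which I expect to be the main obstacle --- is the compactness and uniformity of the family of minimal geodesics ending at (and near) $q$, since that is exactly what tames the non-smoothness of $r=d(p,\cdot)$ at its maximum point.
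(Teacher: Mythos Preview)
Your argument is correct and is precisely the classical Berger escape argument: if all minimal geodesics from $p$ arrive at $q$ making an acute angle with some fixed $V$, then the first variation of distance forces $d(p,\cdot)$ to increase along $\exp_q(sV)$, contradicting maximality. The compactness step for the set of minimal geodesics and the ``unwound'' version via Arzel\`a--Ascoli plus shortcutting are both standard and handled correctly.

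Note that the paper does not actually supply its own proof of this lemma; it simply attributes the result to Berger and, in the remark immediately following, points the reader to \cite{doC}, p.~283, for the details. Your write-up is essentially the proof one finds there (and in Grove--Shiohama's later formalization), so there is no meaningful difference in approach to compare.
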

 
\begin{remark} This result is often stated for $p$ and $q$ which realize $\mathrm{diam}(M,g) = d(p,q)$, but the same proof  works in this case. See \cite{doC} p. 283. \end{remark}
The proofs of Theorems \ref{Thm:Diam} and \ref{Thm:Inj} are now immediate from Berger's Lemma and Lemma \ref{CritPoint}. 

 \begin{proof}[Proof of Theorem \ref{Thm:Diam}]
 By Lemma \ref{BerCrit} if $q$ is a  furthest point from  $p$ then $q$  must be critical to $p$.  Then by Lemma \ref{CritPoint}, $d(p,q) \leq \frac{(n-1) \pi + |X(p)|}{(n-1) \varepsilon}$. 
 \end{proof}

  \begin{proof}[Proof of Theorem \ref{Thm:Inj}]
From  the hypothesis on injectivity radius  there are no critical points of the distance function inside  $B\left( p,   \frac{(n-1) \pi + |X(p)|}{(n-1) \varepsilon}\right)$. Lemma \ref{CritPoint} tells us there are also no critical points outside of the ball so  the manifold is diffeomorphic to $\mathbb{R}^n$. 
  \end{proof}
  
%

\section{Example}

In this section we construct examples showing  the  gap theorems in the previous section are optimal.  In particular we have the following proposition. 

\begin{proposition} Let $n \geq 3$.  For any $\varepsilon < \frac{n-1}{n-2}$ there is a family of Riemannian manifolds with density on the $n$-sphere $(g_{\delta} ,f_{\delta})$ defined for $\delta$ sufficiently small  with $Ric_{g_{\delta}, f_{\delta}} \geq (n-1) \varepsilon$, $\mathrm{Ric}_{g_{\delta}} \leq (n-1)$ such that there is a point $p$ with $\sup_{q \in S^n} d_{g_\delta}(p,q) =  \mathrm{inj}_p(g_{\delta}) = L_{\delta}$ where  $L_{\delta} \rightarrow \frac{\pi}{\varepsilon}$ as $\delta \rightarrow 0$. 
\end{proposition}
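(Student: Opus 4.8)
The plan is to build the examples explicitly as rotationally symmetric metrics on $S^n$, i.e. metrics of the form $g_\delta = dt^2 + \varphi_\delta(t)^2 g_{S^{n-1}}$ on $[0,L_\delta]$ with the usual smoothness conditions at the poles ($\varphi_\delta(0)=\varphi_\delta(L_\delta)=0$, $\varphi_\delta'(0)=1$, $\varphi_\delta'(L_\delta)=-1$, all even/odd derivatives matching), together with a radial density $f_\delta = f_\delta(t)$. For such metrics the relevant curvatures are classical: the radial sectional curvature is $-\varphi_\delta''/\varphi_\delta$ and the spherical one is $(1-(\varphi_\delta')^2)/\varphi_\delta^2$, so $\mathrm{Ric}(\partial_t,\partial_t) = -(n-1)\varphi_\delta''/\varphi_\delta$ and the tangential Ricci eigenvalue is $-\varphi_\delta''/\varphi_\delta + (n-2)(1-(\varphi_\delta')^2)/\varphi_\delta^2$. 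The Bakry–Émery correction is $\mathrm{Hess}\,f_\delta(\partial_t,\partial_t) = f_\delta''$ in the radial direction and $(\varphi_\delta'/\varphi_\delta) f_\delta'$ in the tangential directions. So the two conditions $\mathrm{Ric}_{g_\delta,f_\delta}\ge (n-1)\varepsilon$ and $\mathrm{Ric}_{g_\delta}\le(n-1)$ become a pair of ODE inequalities in $\varphi_\delta, f_\delta$.

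The model to approximate is a cylinder: take $\varphi$ nearly constant, $\varphi_\delta \equiv 1$ on the bulk $[\delta, L_\delta-\delta]$, with small caps of width $\delta$ near each pole interpolating smoothly down to $0$. On the cylindrical part all sectional curvatures vanish, so $\mathrm{Ric}_{g_\delta}=0\le(n-1)$ trivially, and we need only $f_\delta'' \ge (n-1)\varepsilon$ and $(\varphi_\delta'/\varphi_\delta)f_\delta' = 0 \ge (n-1)\varepsilon$ — wait, the tangential inequality $0 \ge (n-1)\varepsilon$ fails, so a pure cylinder does not work and one must keep $\varphi_\delta$ genuinely increasing then decreasing, which is exactly why the bound degrades from $\pi/\varepsilon$ only in the limit. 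The right choice is $\varphi_\delta$ slightly concave so that $\varphi_\delta'/\varphi_\delta$ is a small positive quantity on $[0,L_\delta/2]$ and small negative on the other half; then choosing $f_\delta$ with $f_\delta'>0$ on the first half and $f_\delta'<0$ on the second (say $f_\delta$ even about $t=L_\delta/2$ after reflection, convex), one arranges $f_\delta'' + (\varphi'/\varphi)f_\delta' \ge (n-1)\varepsilon$ with room to spare, while $-\varphi_\delta''/\varphi_\delta$ is a small positive constant $\le (n-1)$. The constraint $\varepsilon < \tfrac{n-1}{n-2}$ enters because on the caps the tangential classical Ricci $-\varphi''/\varphi + (n-2)(1-(\varphi')^2)/\varphi^2$ behaves like that of a round sphere of some radius, and matching $\mathrm{Ric}\le(n-1)$ there while still beating $(n-1)\varepsilon$ with the density forces $\varepsilon(n-2) < n-1$ in the pinching budget.

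Concretely I would fix a profile: let $\varphi_\delta(t) = \tfrac{1}{\omega}\sin(\omega t)$ for $t\in[0,\pi/(2\omega)]$ (a round cap of curvature $\omega^2$), then glue to $\varphi_\delta\equiv 1/\omega$... no — rather glue to a slowly varying concave function, symmetric about the midpoint, so that $L_\delta \to \pi$ as the caps shrink; then rescale the whole metric by $1/\varepsilon$ at the end, as the paper's normalization $\mathrm{Ric}\le(n-1)$ versus the pinched bound $(n-1)\varepsilon$ suggests, to land at $L_\delta \to \pi/\varepsilon$. For $f_\delta$, take $f_\delta(t) = \tfrac{(n-1)\varepsilon}{2}(t - L_\delta/2)^2 + (\text{corrections on caps})$, which gives $f_\delta''=(n-1)\varepsilon$ on the bulk exactly, and since $\varphi'/\varphi\ge 0$ there while $f_\delta'\ge 0$ for $t\ge L_\delta/2$ (and symmetrically), the tangential Bakry–Émery inequality holds with slack; near the poles one checks the cap contributions are $O(\delta)$ perturbations. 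The verification that $p=\gamma(0)$ (a pole) is at distance exactly $L_\delta$ from the far pole and that $\mathrm{inj}_p = L_\delta$ follows because in a rotationally symmetric metric the cut locus of a pole with $\varphi_\delta>0$ on the open interval is just the antipodal pole.

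The main obstacle is the cap region. On $[0,\delta]$ one must simultaneously keep $-\varphi_\delta''/\varphi_\delta$ bounded by $n-1$ (classical radial Ricci), keep $-\varphi_\delta''/\varphi_\delta + (n-2)(1-(\varphi_\delta')^2)/\varphi_\delta^2 \le n-1$ (classical tangential Ricci — this blows up like $(n-2)/\varphi_\delta^2$ as $\varphi_\delta\to 0$ unless the cap is genuinely a round cap of the right curvature), and still get $f_\delta'' + (\varphi_\delta'/\varphi_\delta)f_\delta' \ge (n-1)\varepsilon$ and $f_\delta'' + (n-1)(\text{something}) \ge (n-1)\varepsilon$ for the weighted tangential inequality — and do all this while matching derivatives to the bulk profile. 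The standard fix is to make the cap an exact round hemisphere-cap of curvature $\omega_\delta^2$ with $\omega_\delta$ chosen so the tangential Ricci there equals $(n-1)\omega_\delta^2 \le n-1$ forces $\omega_\delta\le 1$; then $(n-2)(1-(\varphi')^2)/\varphi^2 = (n-2)\omega_\delta^2$ and one needs the density to supply $(n-1)\varepsilon - (n-2)\omega_\delta^2 - \omega_\delta^2\cdot(\text{radial part})$... leading to the inequality $\varepsilon < \tfrac{n-1}{n-2}$ as $\omega_\delta\to$ its admissible range. I would therefore spend most of the proof pinning down the cap profile and the matched asymptotics of $f_\delta$ there, treat the bulk as essentially free, and close by a compactness/limit argument sending $\delta\to 0$ to get $L_\delta \to \pi/\varepsilon$.
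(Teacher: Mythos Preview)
Your overall framework---a rotationally symmetric metric $dt^2+\varphi_\delta(t)^2 g_{S^{n-1}}$ with a radial potential $f_\delta(t)$, built as two caps joined by a long middle region---is exactly the right one, and matches the paper. But the proposal contains a genuine error that derails the argument.

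You assert that on the cylindrical part (where $\varphi_\delta\equiv 1$) ``all sectional curvatures vanish, so $\mathrm{Ric}_{g_\delta}=0$.'' This is false: a cylinder $\mathbb{R}\times S^{n-1}(A)$ is not flat. The tangential sectional curvature is $(1-(\varphi')^2)/\varphi^2 = 1/A^2>0$, and the tangential Ricci eigenvalue is $(n-2)/A^2$. Only the radial curvature $-\varphi''/\varphi$ vanishes. This is not a minor slip; it is precisely the mechanism that makes the construction work. On the cylinder, the tangential Bakry--\'Emery Ricci is $(n-2)/A^2 + (\varphi'/\varphi)f' = (n-2)/A^2$, and so the lower bound $\mathrm{Ric}_f\ge (n-1)\varepsilon$ holds there automatically once $\varepsilon$ is below roughly $(n-2)/(n-1)$ (with $A$ close to $1$); this is where the constraint on $\varepsilon$ actually comes from, not from the caps. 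The density $f$ is only needed to fix the \emph{radial} direction, via $f''=(n-1)\varepsilon$.

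Because you miss this, you conclude the pure cylinder fails and pivot to a ``slightly concave'' $\varphi$ together with sign-matching of $f'$ and $\varphi'/\varphi$. That detour is both unnecessary and, as written, inconsistent: with $f_\delta(t)=\tfrac{(n-1)\varepsilon}{2}(t-L_\delta/2)^2$ one has $f_\delta'\le 0$ on $[0,L_\delta/2]$ while $\varphi'/\varphi\ge 0$ there, so the cross term $(\varphi'/\varphi)f'$ is $\le 0$, the opposite of what you claim. The paper's construction avoids all of this: it takes an exact round unit cap $\phi(r)=\sin r$ on $[0,\pi/2-\delta]$, a short $C^2$ smoothing, and then a genuinely flat cylinder $\phi\equiv A$ on $[\pi/2,L]$, doubled about $L$; on the cap $f$ is the downward parabola $-\tfrac{n-1}{2}(1-\varepsilon)r^2$ (so that the radial $\mathrm{Ric}_f$ is exactly $(n-1)\varepsilon$ there), and on the cylinder $f$ is an upward parabola with $f''=(n-1)\varepsilon$. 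The length $L_\delta$ is dictated by the location of the critical point of this second parabola, which tends to $\pi/(2\varepsilon)$ as $\delta\to 0$, giving diameter $\to \pi/\varepsilon$ after doubling.
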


\begin{proof} 
 The metrics are rotationally symmetric metrics of the form 
\[ dr^2 + \phi^2(r) g_{S^{n-1}}. \]
Let $\partial_r$ denote the tangent vector in the $r$ direction and let $X$ be a unit vector perpendicular to $\partial_r$.   We will define $f = f(r)$ our potential function to be a function of $r$, then the Bakry-Emery  tensor has the formula 
\begin{eqnarray*}
\mathrm{Ric}_f (\partial_r, \partial_r) &=& -(n-1) \frac{\ddot{\phi}}{\phi} + \ddot{f} \\
\mathrm{Ric}_f (\partial_r, X) &=& = 0 \\
\mathrm{Ric}_f(X,X) &=& - \frac{\ddot{\phi}}{\phi} + (n-2) \left( \frac{1 - (\dot{\phi})^2}{\phi^2} \right) + \frac{\dot{f} \dot{\phi}}{\phi} 
\end{eqnarray*}

Consider a $C^2$ half-capped cylinder given by $\phi$ defined on $[0,L]$  as 
\[ \phi(r) = \left \{ \begin{array}{cc} \sin(r) & 0 \leq r < \frac{\pi}{2} - \delta \\ \phi_0(r) & \frac{\pi}{2} - \delta \leq r < \frac{\pi}{2}  \\ A & \frac{\pi}{2} \leq r  \leq L \end{array} \right. \]
where  $\phi_0(r)$ is will be described later, and $L$, $\delta$ and $A$  are positive numbers to be chosen. 
For our potential function we define 
\[ f(r) = \left\{\begin{array}{cc}  -\frac{n-1}{2} (1-\varepsilon)r^2 &   0 \leq r < \frac{\pi}{2} - 2\delta\\ f_0(r) &  \frac{\pi}{2} - 2\delta \leq r < \frac{\pi}{2} - \delta \\ \frac{(n-1)\varepsilon}{2}( r - \frac{\pi}{2} + \delta)^2 - (n-1)(1 - \varepsilon)(\frac{\pi}{2} - 2 \delta) (r - \frac{\pi}{2} + \delta) + B&  \frac{\pi}{2} - \delta \leq r \leq L \end{array} \right. \]
Where the function $f_0$ will be described later and $B$ is another  constant to be chosen. 

 For $r >  \frac{\pi}{2} - \delta$, $f$ is a parabola with a critical point at the point $ \frac{\pi}{2\varepsilon} + \delta + 2 \frac{\delta}{\varepsilon}$ and the metric is a flat cylinder.  Thus we can double  $f$ and $\phi$  to obtain a smooth metric on the sphere of diameter and injectivity radius, $L_{\delta} = \frac{\pi}{\varepsilon} + 2\delta + 4 \frac{\delta}{\varepsilon}$.

For $r \in [0, \frac{\pi}{2} - 2 \delta]$ we have 
\begin{eqnarray*}
\mathrm{Ric}_f (\partial_r, \partial_r) &=& (n-1) \varepsilon \\
\mathrm{Ric}_f (X,X) &=& (n-1) \left( 1 - (1-\varepsilon) r \cot(r) \right) \\
&\geq& (n-1) \varepsilon 
\end{eqnarray*}
Since $r\cot(r)$ decreases from $1$ to zero on $[0,\frac\pi2]$. Also, for $r \geq  \frac{\pi}{2} $ we have
\begin{eqnarray*}
\mathrm{Ric}_f (\partial_r, \partial_r) &=& (n-1) \varepsilon \\
\mathrm{Ric}_f (X,X) &=& (n-2) (1/A)
\end{eqnarray*}
Thus as long as $\varepsilon < \frac{n-2}{(n-1)A}$, we have $\mathrm{Ric}_f \geq (n-1) \varepsilon g$ outside of region $ \frac{\pi}{2} - 2\delta \leq r < \frac{\pi}{2}$.  
To control the curvature in that region we must  choose  $\phi_0$ and $f_0$ appropriately.  In order for $f$ to be smooth at $\pi/2 - 2 \delta$  we choose $f_0$ to  satisfy the initial conditions
\[  \ddot{f_0}(\frac{\pi}{2} - 2 \delta) = -(n-1)(1- \varepsilon) \quad \& \quad \dot{f_0}((\frac{\pi}{2} - 2 \delta) = -(n-1) (1-\varepsilon) (\frac{\pi}{2} - 2 \delta).  \]
Define $f_0$ by prescribing $\ddot{f}_0(r)$ to be a monotone increasing function that satisfies  $\ddot{f}_0(\pi/2 - \delta) = (n-1)\varepsilon$ and $\int_{\pi/2 - 2\delta}^{\pi/2 - \delta} \ddot{f}_0(t) dt = 0$.  The second condition assures that  $f$ will also be smooth at $\pi/2 - \delta$ after choosing $C$ to be an appropriate constant. 

Then for any $r \in [\pi/2 - 2 \delta, \pi/2-\delta]$ we have  
\begin{eqnarray*}  \dot{f}_0(r)& =&  \int_{\pi/2 - 2\delta}^r \ddot{f}(t) dt + \dot{f}_0(\pi/2 - 2\delta) \\
&\geq &- (n-1) (1-\varepsilon) (\frac{\pi}{2} -  \delta)
\end{eqnarray*}
Then for the curvature we have 
\begin{eqnarray*}
\mathrm{Ric}_f (\partial_r, \partial_r) &\geq& (n-1)\varepsilon \\
\mathrm{Ric}_f(X,X) &=& (n-1)+ \dot{f}(r) \cot(r)\\
&\geq & (n-1) \left(  1 - (1 - \varepsilon)(\pi/2 - \delta) \cot(r) \right)
\end{eqnarray*}
Since $\cot(r) \rightarrow 0$ at $\pi/2$ we can thus see that $\mathrm{Ric}_f(X,X) \geq (n-1) \varepsilon$ if $\delta$ is chosen small enough. 

Define $\phi_0$ similarly by assuming it satisfies the initial conditions 
\[ \phi_0 (\frac{\pi}{2} - \delta) = \sin (\frac{\pi}{2} - \delta) \quad \& \quad  \dot{\phi}_0((\frac{\pi}{2} - \delta) = \cos(\frac{\pi}{2} - \delta)\quad \& \quad  \ddot{\phi}_0((\frac{\pi}{2} - \delta) = -\sin (\frac{\pi}{2} - \delta)\]
And letting $\ddot{\phi}$ increase from $-\sin(\frac{\pi}{2} - \delta)$ to $0$ such that it satisfies $  \cos(\pi/2 - \delta) = \int_{\pi/2 - \delta}^{\pi/2} \ddot{\phi}(t) dt $. Then choosing $A$ to be an appropriate constant we will have a $C^2$ function $\phi$.   When $\delta$ is small, $A$ will be close to $1$ and we  can see  that $\phi_0$ will be close to $A$,  $\dot{\phi_0}$ is close to zero and $\ddot{\phi} \geq 0$, which implies that $\mathrm{Ric}_f \geq (n-1) \varepsilon$ in this region as well.  
\end{proof}

 \section{Pinched sectional curvature}
  
 Now we fix some more  notation for the weighted sectional curvature.  Given an orthonormal pair of vectors  $U, V$  we define 
\[ \sec_X (U,V) = \sec(U,V) + \frac{1}{2} L_Xg(U,U) \]
Where $\sec(U,V)$ is the sectional curvature of the plan spanned by $U$ and $V$.  Note that the weighted curvature $\sec_X(U,V)$ is not symmetric in $U$ and $V$.  See Section 2 of \cite{WylieII}  for further discussion. 

As is traditional, the topological tool  we will  use here is not critical points of the distance function, but the Morse theory of the   energy functional applied to the path space.  For submanifolds $A$ and $B$ in $M$ define the path space as  
 \[ \Omega_{A,B}(M) = \{ \gamma:[0,1] \rightarrow M, \gamma(0) = A, \gamma(1) = B \}
 \]  
 We will only look at the space of paths between points $p$ and $q$,   $\Omega_{p,q}$.  We consider the Energy $E:  \Omega_{p,q}(M) \rightarrow \mathbb{R}$ and  variation fields  which vanish at both end points of the curve. The  critical points of $E$ are then the geodesics connecting $p$ and $q$ and we say  the index of such a geodesic is $\geq k$ if there is a $k$-dimensional space of variation fields along the geodesic which have negative second variation.    In order to estimate the index of a geodesic, we have the following lemma for sectional curvature modeled on Lemma   \ref{IntRicci}. 
  
 \begin{lemma} \label{Lem:Index} Suppose $(M,g)$ is a Riemannian manifold  with $\mathrm{sec} \leq 1$ and $\gamma:[0,r] \rightarrow M$ is a unit speed minimizing geodesic of length greater than or equal to $\pi$.   Suppose that for any perpendicular parallel field along $\gamma$, $E$, we have  
\[ \int_0^r \mathrm{sec}(\gp, E) dt > \pi \]
then  the index of $\gamma$ is greater than or equal to $(n-1)$.  \end{lemma}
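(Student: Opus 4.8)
The plan is to mimic the classical Morse-index estimate: for a minimizing (hence index-zero) geodesic $\gamma$, the fact that the second variation of energy is nonnegative in every direction forces an upper bound on a suitable integral of sectional curvature along $\gamma$, and conversely a lower bound on such an integral forces the index to be large. Here the subtlety is that the relevant comparison fields are the parallel perpendicular fields truncated by a fixed cutoff $\phi$, and the curvature that enters naturally is $\sec(\gp, E)$ for $E$ parallel and perpendicular. So first I would fix an orthonormal parallel frame $E_1,\dots,E_{n-1}$ perpendicular to $\gp$ along $\gamma$, and for each $i$ consider the variation field $V_i = \phi E_i$ where $\phi$ is to be chosen with $\phi(0)=\phi(r)=0$. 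The index form evaluated on $V_i$ is
\[
I(V_i,V_i) = \int_0^r (\phi')^2 - \phi^2 \sec(\gp, E_i)\, dt .
\]
If I can show $I(V_i,V_i) < 0$ for every $i$, then the $(n-1)$-dimensional span of the $V_i$ is a space of variation fields with negative second variation, giving $\mathrm{index}(\gamma) \geq n-1$.

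The key step is the choice of cutoff $\phi$ and the bookkeeping that converts the hypothesis $\int_0^r \sec(\gp,E)\,dt > \pi$ into $I(V_i,V_i) < 0$. Here I would reuse exactly the cutoff from Lemma \ref{IntRicci}: $\phi = \sin t$ on $[0,\pi/2]$, $\phi \equiv 1$ on $[\pi/2, r - \pi/2]$, and $\phi = -\sin(t-r)$ on $[r-\pi/2, r]$ (this is where the length $\geq \pi$ hypothesis is used, so that the middle interval is nonempty). Writing $\sec_i := \sec(\gp, E_i)$, one has
\[
I(V_i,V_i) = \int_0^r (\phi')^2\,dt - \int_0^r \phi^2 \sec_i\, dt
= \int_0^r (\phi')^2 - \phi^2\, dt + \int_0^r (1-\phi^2)\sec_i\,dt - \int_0^r \sec_i\, dt .
\]
The middle term is controlled using $\sec \leq 1$: since $1-\phi^2 \geq 0$ and is supported on the two end intervals of total length $\pi$, $\int_0^r (1-\phi^2)\sec_i\,dt \leq \int_0^r (1-\phi^2)\,dt$, and combined with the first term this is exactly the computation $\int_0^r (\phi')^2 - \phi^2 + (1-\phi^2)\,dt = (n-1)^{-1}\cdot(n-1)\pi = \pi$ carried out in the proof of Lemma \ref{IntRicci} (divided by $n-1$). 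Hence $I(V_i,V_i) \leq \pi - \int_0^r \sec_i\,dt < 0$ by hypothesis, for each $i$.

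The main obstacle is really just making sure the inequalities go the right way and that $\phi'$ exists in the weak/$L^2$ sense needed for the index form (it is piecewise $C^1$ and continuous, which suffices), and that I have not secretly used $\gamma$ minimizing — in fact I do not need minimality for the conclusion "index $\geq n-1$", only for defining the index form on the right space of fields; minimality is used elsewhere in the paper to guarantee such a $\gamma$ exists, but the lemma as stated is a pointwise statement about $\gamma$. I would double-check that the strict inequality in the hypothesis is what upgrades $\leq \pi$ to $< 0$ after subtraction. Once that is in place, the conclusion follows: the span of $\{V_1,\dots,V_{n-1}\}$ is an $(n-1)$-dimensional space of compactly supported perpendicular variation fields on which the index form is negative definite, so the index of $\gamma$ is at least $n-1$.
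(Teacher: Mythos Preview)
Your approach is exactly the paper's: use the variation fields $\phi E$ with the same cutoff $\phi$ from Lemma~\ref{IntRicci}, bound the index form using $\sec\le 1$ on the support of $1-\phi^2$, and conclude $I(\phi E,\phi E)\le \pi-\int_0^r\sec(\gp,E)\,dt<0$ for every unit parallel perpendicular $E$.

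Two small slips to fix. First, your displayed decomposition has an extra $-\phi^2$: the correct identity is
\[
I(V_i,V_i)=\int_0^r(\phi')^2\,dt+\int_0^r(1-\phi^2)\sec_i\,dt-\int_0^r\sec_i\,dt,
\]
and after applying $\sec_i\le 1$ one gets $\int_0^r\bigl((\phi')^2+(1-\phi^2)\bigr)\,dt=\pi$, not the integrand $(\phi')^2-\phi^2+(1-\phi^2)$ you wrote. Second, the sentence ``If I can show $I(V_i,V_i)<0$ for every $i$, then the span \ldots'' is not literally enough, since the index form need not be diagonal in the $V_i$; what actually works (and what you implicitly use) is that every unit vector in that span is again of the form $\phi E$ for a unit parallel perpendicular $E$, and the hypothesis applies to \emph{all} such $E$, not just the basis $E_i$. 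With those corrections the argument is complete and matches the paper.
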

 
 \begin{proof} 
   Let $V$ be the proper variation field $\phi E$ where $\phi$ is the function constructed in Lemma \ref{IntRicci}. 
   The second variation of arc length formula shows that 
   \[ \frac{d^2}{ds^2} E(0) = \int_0^r  (\phi')^2 - \phi^2 \mathrm{sec}(\gp, E) dt  \]
   Assume that  $ \frac{d^2}{ds^2} E(0) \geq 0$, then imitating  the proof of  Lemma \ref{IntRicci}  implies that $ \int_{\gamma} \mathrm{sec}(\gp,E) dt \leq \pi$.
 Therefore if  $ \int_0^r \mathrm{sec}(\gp, E) dt >\pi$ then the second variation must be negative, since there are $(n-1)$ linearly independent fields, we obtain the result.  
 \end{proof}

Applying the Lemma to the elements of $\Omega_{p,p}$ gives us the following. 
 
 \begin{lemma} \label{Lem:Index2}
 Suppose that $M$ is a complete Riemannian manifold supporting a vector field $X$  with a zero at a point $p$ such that
    \[  \mathrm{sec}_X\geq  \varepsilon \text{ and }\mathrm{sec} \leq 1 \qquad 0<\varepsilon<1. \] If $\gamma$ is a geodesic  loop  based at $p$  
    of length greater than $\frac{\pi}{\varepsilon}$  then $\mathrm{index}(\gamma) \geq(n-1)$. 
  \end{lemma}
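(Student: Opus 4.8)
The plan is to verify the hypotheses of Lemma \ref{Lem:Index} for $\gamma$ and then invoke it. Parametrize $\gamma:[0,\ell]\to M$ by arclength, where $\ell$ is its length, so that $\gamma(0)=\gamma(\ell)=p$ and $\ell>\pi/\varepsilon$. Since $\varepsilon<1$ this forces $\ell>\pi$, so the length requirement of Lemma \ref{Lem:Index} is met. I should note here that the word ``minimizing'' in Lemma \ref{Lem:Index} is not essential: its proof uses only the second variation of energy formula along a geodesic, which is valid even though a geodesic loop need not minimize, and the possible corner of $\gamma$ at $p$ is harmless since $\gamma$ is still a critical point of the energy on $\Omega_{p,p}$.

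It then remains to show that $\int_0^\ell \mathrm{sec}(\gp,E)\,dt>\pi$ for every unit perpendicular parallel field $E$ along $\gamma$. Fix such an $E$ and apply the inequality $\mathrm{sec}_X\geq\varepsilon$ to the two-plane $\mathrm{span}(\gp,E)$ with the unit vector $U=\gp$; this gives the pointwise bound
\[ \mathrm{sec}(\gp,E)\ \geq\ \varepsilon-\frac12 L_Xg(\gp,\gp). \]
As in the proof of Lemma \ref{Lem:Crit}, since $\gamma$ is a geodesic we have $\frac12 L_Xg(\gp,\gp)=g(\nabla_{\gp}X,\gp)=\frac{d}{dt}g(X,\gp)$, and integrating over $[0,\ell]$ yields
\[ \int_0^\ell \mathrm{sec}(\gp,E)\,dt\ \geq\ \varepsilon\ell-\big(g(X(\gamma(\ell)),\gp(\ell))-g(X(\gamma(0)),\gp(0))\big). \]

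The key point is that the boundary term vanishes: since $\gamma(\ell)=\gamma(0)=p$ and $X(p)=0$, both inner products are $0$. This is exactly where the hypothesis that $X$ has a zero at $p$ is used, and it is also why a corner of $\gamma$ at $p$ does not matter. Hence $\int_0^\ell \mathrm{sec}(\gp,E)\,dt\geq\varepsilon\ell>\pi$ for every such $E$, so Lemma \ref{Lem:Index} applies and gives $\mathrm{index}(\gamma)\geq n-1$. I do not expect a genuine obstacle; the only points requiring care are the bookkeeping of the boundary terms after integrating by parts and the observation that Lemma \ref{Lem:Index} does not actually need $\gamma$ to be a minimizing geodesic.
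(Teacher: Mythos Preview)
Your proof is correct and follows essentially the same route as the paper: apply the $\sec_X\geq\varepsilon$ bound with $U=\gp$, integrate, use that $\tfrac12 L_Xg(\gp,\gp)=\tfrac{d}{dt}g(X,\gp)$ so the boundary terms vanish because $X(p)=0$, and then invoke Lemma~\ref{Lem:Index}. Your added remarks that ``minimizing'' in Lemma~\ref{Lem:Index} is not actually used and that a possible corner of $\gamma$ at $p$ is harmless are correct and make the argument slightly more careful than the paper's version.
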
 
  
  \begin{proof}
  Let $E$ be a unit parallel field around $\gamma$, then 
  
 \begin{eqnarray*}
   \varepsilon \mathrm{length}(\gamma)\leq  \int_0^r \mathrm{sec}_X(\gp, E) dt  &=& \int_0^r \mathrm{sec}(\gp, E) + \frac{d}{dt} g(X, \gp)  dt  \\
   &=&  \int_0^r \mathrm{sec}(\gp, E) dt
   \end{eqnarray*}
   Therefore, for such a geodesic, $  \int_0^r \mathrm{sec}(\gp, E) dt> \pi$ for all unit perpendicular parallel fields.   By the previous lemma, $\mathrm{index}(\gamma) \geq(n-1)$.
   \end{proof}
   
   \begin{remark} In the  example in the previous section, if we let $\gamma$ be the geodesic in the $\partial_r$ directions we have $\sec_f(\partial r, E) \geq \varepsilon$, $\sec \leq 1$, and we can make $\gamma$ minimizing for a length arbitrarily close to $\frac{\pi}{\varepsilon}$ by taking $\delta \rightarrow 0$.  This shows the proof of Lemma \ref{Lem:Index2} is optimal.  On the other hand, the examples of section 2 do not have $\sec_X >0$ because if $U,V$ are both perpendicular to $\partial_r$ on the region where $r> \pi/2$, $\sec_X(U,V) = 0$.  For examples of compact  metrics with $\sec_X >0$ and $\sec\leq 1$ which do not have $\sec>0$ we refer the reader to \cite{KennardWylie}. 
   \end{remark}
   
   This gives us the following generalization of a sphere theorem of Berger \cite{Berger1}. 
   
   \begin{theorem} \label{Thm:Berg}
   Suppose that $M$ is a complete Riemannian manifold supporting a vector field $X$  with a zero at a point $p$ such that
    \[  \mathrm{sec}_X > \varepsilon \text{ and }\mathrm{sec} \leq 1 \qquad 0<\varepsilon<1. \] 
    If $\mathrm{inj}_p(M) > \frac{\pi}{2 \varepsilon}$ then $M$ is either homotopic to a sphere or it is contractible.  
    \end{theorem}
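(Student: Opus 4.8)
The plan is to adapt the classical Morse-theoretic arguments of Berger and Klingenberg, running the Morse theory of the energy functional $E$ on the based loop space $\Omega_{p,p}(M)$, with Lemma~\ref{Lem:Index2} in place of the usual index estimate.

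First I would record the metric input: \emph{every non-constant geodesic loop at $p$} (a geodesic segment from $p$ to $p$, possibly with a corner at $p$) \emph{has length at least $2\,\mathrm{inj}_p(M)$.} This is the standard estimate $\mathrm{inj}_p(M)\le\tfrac12\ell_p$, where $\ell_p$ is the length of the shortest such loop; one proves it by cutting a loop $\gamma\colon[0,L]\to M$ at its midpoint $m=\gamma(L/2)$: if $d(p,m)=L/2$, then the two halves are distinct minimizing geodesics from $p$ to $m$ (equality would force $\gp(L/2)=0$), so $m$ is a cut point and $L/2\ge\mathrm{inj}_p(M)$; otherwise one half is already non-minimizing, so it meets a cut point at some time $t_0<L/2$ with $t_0\ge\mathrm{inj}_p(M)$. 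Combined with the hypothesis $\mathrm{inj}_p(M)>\pi/(2\varepsilon)$, this gives $\mathrm{length}(\gamma)>\pi/\varepsilon$, so Lemma~\ref{Lem:Index2} applies and $\mathrm{index}(\gamma)\ge n-1$. (This is exactly where the zero of $X$ at $p$ enters: along a geodesic loop the $\sec_X$-correction integrates to $g(X(p),\gp(L))-g(X(p),\gp(0))=0$, even across the corner at $p$.)

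Now I would run Morse theory. The critical points of $E$ on $\Omega_{p,p}(M)$ are the geodesic loops at $p$: the constant loop $c_p$, which is a nondegenerate absolute minimum of index $0$, and the non-constant loops, all of index $\ge n-1$ by the previous step. Hence $\Omega_{p,p}(M)$, which is homotopy equivalent to the based loop space $\Omega M$, has the homotopy type of a CW complex with a single $0$-cell and all remaining cells of dimension $\ge n-1$; in particular it is $(n-2)$-connected. Via the path fibration $\Omega M\to PM\to M$ this gives $\pi_i(M)\cong\pi_{i-1}(\Omega M)=0$ for $1\le i\le n-1$, so $M$ is $(n-1)$-connected, hence simply connected. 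If $M$ is compact it is then an orientable closed $n$-manifold with the homology of $S^n$, and Poincar\'e duality together with Hurewicz and Whitehead's theorem yields a homotopy equivalence $S^n\simeq M$. If $M$ is non-compact then it is simply connected with $H_i(M)=0$ for all $i\ge1$ (using $H_n(M)=0$ for a non-compact connected $n$-manifold), hence acyclic, hence contractible; completeness is what guarantees Condition~(C) for $E$ in this case.

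The step needing the most care — and the only real obstacle — is that the non-constant geodesic loops at $p$ may be degenerate critical points (on a round sphere, say, $p$ is conjugate to itself along each of them), so the bare ``one cell per critical point'' statement does not apply verbatim. The standard remedy, as in Berger and Klingenberg, is to move $p$ to a nearby point $q$ off the measure-zero set of points conjugate to $p$ along some geodesic: then $\Omega_{p,q}(M)\simeq\Omega_{p,p}(M)$, all critical points become nondegenerate, the minimizing geodesic $p\to q$ is the unique one of index $0$, and for $q$ close enough to $p$ every other geodesic from $p$ to $q$ has length $\ge 2\,\mathrm{inj}_p(M)-d(p,q)>\pi/\varepsilon$ while the now-nonzero correction $g(X(q),\gp(L))$ is $O(d(p,q))$, absorbed by the strict slack $2\varepsilon\,\mathrm{inj}_p(M)-\pi>0$, so such geodesics still have index $\ge n-1$ by Lemma~\ref{Lem:Index}; alternatively one invokes the Morse--Bott version of the theory, noting that a critical submanifold of index $\ge n-1$ attaches only cells of dimension $\ge n-1$.
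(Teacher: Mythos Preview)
Your proof is correct and follows essentially the same route as the paper: use the injectivity radius bound to force every nontrivial geodesic loop at $p$ to have length greater than $\pi/\varepsilon$, apply Lemma~\ref{Lem:Index2} to get index $\ge n-1$, deduce that $M$ is $(n-1)$-connected via Morse theory on $\Omega_{p,p}$, and then split into the compact and non-compact topological endgames. The paper is terser --- it simply asserts the loop-length bound and cites a reference for the index-to-connectivity step --- whereas you spell out the $\mathrm{inj}_p\le\tfrac12\ell_p$ argument and explicitly address the possible degeneracy of critical points by perturbing to a nearby regular value $q$; this extra care is welcome but does not change the strategy.
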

    
    \begin{proof}
    If $\mathrm{inj}_p(M) > \frac{\pi}{2 \varepsilon}$ then every geodesic in $\Omega_{p,p}$ has length greater than $\frac{\pi}{\varepsilon}$.  From the previous lemma, this shows that every geodesic must have index at least  $(n-1)$.  This implies (see Theorem 32 of  \cite{Petersen}) that $M$ is $(n-1)$-connected.    
    
 An $(n-1)$-connected compact $n$-manifold is homotopic to the sphere.   To see this, first note that by the Hurewicz Theorem we have $\pi_n(M) = H_n(M)$. In the compact case, by Poincare duality,  we have $H_n(M) = \mathbb{Z}$  so  there is a degree 1 map from $M$ to the sphere, and this map induces isomorphisms on $H_i(M)$ for all $i$.  By a theorem of Whitehead (see page 418 of \cite{Hatcher}) a map inducing isomorphisms on all homology groups is a homotopy equivalence.  
 
By a similar topological argument,  an $(n-1)$-connected non-compact $n$-manifold is contractible.  This follows because now we have $H_n(M) = 0$, and a map from a point into $M$ then induces isomorphisms on all homology groups and is thus a homotopy equivalence. 
 
\end{proof}
    
Theorem \ref{Thm:Berg}  shows that  all we need is an injectivity radius estimate  in order to prove Theorem \ref{Thm:Sec}. In the classical case, the injectivity radius estimate under $1/4$-pinching is due to Klingenberg \cite{Kling}.  Our injectivity radius estimate is similar to Klingenberg's, we refer the reader to the texts \cite{doC, Klingbook, Petersen} for the details of the proof.  The first step is  the long homotopy lemma, which only depends on an upper bound on curvature,  so extends immediately to our setting. 
    
    \begin{lemma} [Klingenberg's Long Homotopy Lemma] Suppose that we have $\mathrm{sec} \leq 1$ and suppose that $p, q \in M$ such that $p$ and $q$ are joined by two distinct geodesics  $\gamma_0$ and $\gamma_1$ which are homotopic.  Then there exists a curve in the homotopy $\alpha_{t_0}$ such that 
    \[ \mathrm{length}(\alpha_{t_0}) \geq 2\pi - \mathrm{min}\{ \mathrm{length}(\gamma_i)\} \]
    \end{lemma}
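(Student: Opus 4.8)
The plan is to observe that this is a purely classical statement: the only hypothesis used is $\mathrm{sec}\le 1$, and nothing about $X$ or about $\sec_X$ enters, so the lemma is Klingenberg's and the argument is exactly the one in \cite{doC, Klingbook, Petersen}; I will only outline it. Write $\ell_i=\mathrm{length}(\gamma_i)$ and, relabelling if necessary, assume $\ell_0=\min\{\ell_0,\ell_1\}$. If $\ell_0\ge\pi$ there is nothing to prove, since the curve $\alpha_0=\gamma_0$ already has $\mathrm{length}(\alpha_0)=\ell_0\ge\pi\ge 2\pi-\ell_0$. So assume $\ell_0<\pi$, and let $\{\alpha_t\}_{t\in[0,1]}$ be the homotopy, with $\alpha_0=\gamma_0$, $\alpha_1=\gamma_1$, each $\alpha_t:[0,1]\to M$, $\alpha_t(0)=p$, $\alpha_t(1)=q$. (We take $M$ complete, as in all our applications.)

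There are two ingredients, both consequences of $\mathrm{sec}\le 1$. First, by Rauch comparison the first conjugate point along any geodesic from $p$ occurs at distance $\ge\pi$, so $\exp_p$ is a local diffeomorphism on the open ball $B(0,\pi)\subset T_pM$. Second, the Gauss lemma, applied along the $\exp_p$-image of a radial-polar decomposition of a lifted curve, shows that if $c$ is a curve whose lift $\tilde c$ through $\exp_p$ stays in $B(0,\pi)$ and runs from $w_1$ to $w_2$, then $\mathrm{length}(c)\ge\bigl|\,|w_1|-|w_2|\,\bigr|$; in particular $\mathrm{length}(c)\ge|w_2|$ when $w_1=0$.

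Now lift the homotopy: for each $t$ let $\tilde\alpha_t$ be the lift of $\alpha_t$ with $\tilde\alpha_t(0)=0$, defined maximally as long as it stays in $B(0,\pi)$; note $\tilde\alpha_0$ is the radial segment from $0$ to $\dot{\gamma}_0(0)$, which lies in $B(0,\pi)$ since $\ell_0<\pi$. Let $G\subset[0,1]$ be the set of $t$ for which $\tilde\alpha_t$ is defined on all of $[0,1]$ and stays in $B(0,\pi)$; $G$ is open and contains $0$, and on the connected component of $0$ the endpoint $\tilde\alpha_t(1)$ varies continuously in the discrete set $\exp_p^{-1}(q)\cap B(0,\pi)$, hence is constantly $\dot{\gamma}_0(0)$. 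If $G=[0,1]$, then $\tilde\alpha_1$ is a lift of the geodesic $\gamma_1$ from $0$ that lies in $B(0,\pi)$ and ends at $\dot{\gamma}_0(0)$; since $\gamma_1$ is a geodesic this forces $\ell_1<\pi$ and, by uniqueness of lifts, $\tilde\alpha_1$ is the radial segment to $\dot{\gamma}_1(0)$, so $\dot{\gamma}_1(0)=\dot{\gamma}_0(0)$ and $\gamma_0=\gamma_1$, contradicting that the geodesics are distinct. Hence $G\ne[0,1]$. Let $t_0=\sup\{t:[0,t]\subset G\}$; then $t_0\notin G$, and as $t\to t_0^-$ the lifted curves are forced to the boundary sphere, i.e. $\rho_t:=\max_u|\tilde\alpha_t(u)|\to\pi$. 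Choosing $u_t$ with $|\tilde\alpha_t(u_t)|=\rho_t$ and splitting $\alpha_t$ at $u_t$, the length estimate applied to the two pieces — from $0$ to $\tilde\alpha_t(u_t)$, and from $\tilde\alpha_t(u_t)$ to $\tilde\alpha_t(1)=\dot{\gamma}_0(0)$ — gives $\mathrm{length}(\alpha_t)\ge 2\rho_t-\ell_0\to 2\pi-\ell_0$. Since $t\mapsto\mathrm{length}(\alpha_t)$ is continuous on the compact interval $[0,1]$, its maximum is at least $2\pi-\ell_0$, and any $t_0$ achieving it gives the required curve.

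The only genuinely delicate point — and the reason for deferring the complete details to the cited texts rather than reproducing them — is the bookkeeping in the last step: one must check that the lifted homotopy is well defined and depends continuously on $t$ up to the instant it first touches $\partial B(0,\pi)$, and that the lower bound on length is attained by an honest member of the homotopy rather than merely approached. All of the curvature input is confined to the two facts in the second paragraph, which is precisely why the lemma transfers verbatim to the present weighted setting.
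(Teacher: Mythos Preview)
Your proposal is correct and matches the paper's treatment: the paper does not prove this lemma at all but simply states it as classical, observes that it depends only on the upper bound $\sec\le 1$, and defers to the references \cite{doC, Klingbook, Petersen}. You do exactly the same, with the added bonus of a faithful sketch of the standard lifting argument via $\exp_p$ on $B(0,\pi)$; nothing further is needed.
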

    
The proof of the injectivity radius estimate now follows the same general argument  as in the classical case,  the only difference is that we use Lemma \ref{Lem:Index} to control  the index.  We refer the reader to p. 279 of \cite{doC} for the version of the classical proof on which ours is modeled. 

\begin{theorem} \label{Thm:Kling}  Suppose that $(M,g,X)$ is complete, simply connected,   and has  
 \[  \mathrm{sec}_X \geq \varepsilon > \frac12  \text{ and }\mathrm{sec} \leq 1. \] 
If $p$ is a zero of $X$, then $\mathrm{inj}_p(M) \geq \pi$. 
\end{theorem}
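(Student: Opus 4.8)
The plan is to argue by contradiction, importing the classical Klingenberg injectivity‑radius scheme and substituting Lemma~\ref{Lem:Index2} (equivalently Lemma~\ref{Lem:Index}) for the curvature‑comparison input on which the classical proof relies. Since $\sec_X\geq\varepsilon$ implies $\sec_X\geq\varepsilon'$ for any $\varepsilon'\in(\tfrac12,\varepsilon]$ and the conclusion does not involve $\varepsilon$, I may assume $\tfrac12<\varepsilon<1$. Assume $\mathrm{inj}_p(M)=\ell<\pi$. Because $\sec\leq1$, Rauch comparison gives no conjugate point along any geodesic from $p$ before distance $\pi$, so the conjugate radius at $p$ is at least $\pi>\ell$; hence Klingenberg's lemma (\cite{doC}) produces a point $q$ with $d(p,q)=\ell$ and two distinct minimizing unit‑speed geodesics $\sigma_0,\sigma_1\colon[0,\ell]\to M$ from $p$ to $q$. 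Write $\gamma=\sigma_0\ast\bar\sigma_1$ for the resulting loop based at $p$, of length $2\ell<2\pi$ and with both halves minimizing.

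Next I would run the topological core of the classical argument on p.~279 of \cite{doC}, which survives unchanged because it uses only an upper curvature bound. Since $M$ is simply connected, $\sigma_0$ and $\sigma_1$ are homotopic rel endpoints, so $\gamma$ is null‑homotopic in the based loop space $\Omega_{p,p}(M)$. Feeding such a rel‑endpoints homotopy into Klingenberg's Long Homotopy Lemma (which needs only $\sec\leq1$) and re‑closing it with the fixed arc $\bar\sigma_1$, exactly as in the classical proof, one obtains a one‑parameter family of loops at $p$ deforming $\gamma$ to the constant loop whose maximal length is bounded below as in the classical case; crucially, because $\varepsilon>\tfrac12$ this lower bound can be arranged to exceed $\pi/\varepsilon$. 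A min–max over this family, using Morse theory for the energy functional on $\Omega_{p,p}(M)$, then yields a nonconstant geodesic loop $c$ based at $p$ with $L(c)>\pi/\varepsilon$ and, being detected by a one‑parameter sweepout, with $\mathrm{index}(c)\leq1$.

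Now the hypothesis that $X$ vanishes at $p$ enters decisively. Since $c$ is a geodesic loop based at the zero $p$ of $X$ with $L(c)>\pi/\varepsilon$, Lemma~\ref{Lem:Index2} applies directly and gives $\mathrm{index}(c)\geq n-1$. Concretely, the zero of $X$ at $p$ makes the boundary term in
\[ \int_0^{L(c)}\tfrac12 L_Xg(\dot c,\dot c)\,dt=\big[\,g(X,\dot c)\,\big]_0^{L(c)} \]
vanish, so that $\sec_X\geq\varepsilon$ forces $\int_0^{L(c)}\mathrm{sec}(\dot c,E)\,dt\geq\varepsilon L(c)>\pi$ for every unit perpendicular parallel field $E$ along $c$, which is precisely the hypothesis of Lemma~\ref{Lem:Index}. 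For $n\geq3$ this contradicts $\mathrm{index}(c)\leq1$, and the contradiction proves $\mathrm{inj}_p(M)\geq\pi$; the case $n=2$ is elementary.

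I expect the main obstacle to be the second paragraph: honestly extracting, from the mere fact that $\gamma$ is null‑homotopic, a min–max geodesic loop of index at most $1$ and of length exceeding $\pi/\varepsilon$, in a setting with no Alexandrov‑type triangle comparison for $\sec_X$. The Morse‑theoretic bookkeeping must be arranged so that the only geometric input ever used is the second‑variation estimate packaged in Lemma~\ref{Lem:Index}. The threshold $\tfrac12$ is forced exactly here: the classical quarter‑pinched estimate uses the conjugate‑radius value $\pi/\sqrt{1/4}=2\pi$, whose weighted analogue is the length $\pi/\varepsilon$ in Lemma~\ref{Lem:Index2}, and one needs $\pi/\varepsilon<2\pi$, i.e. $\varepsilon>\tfrac12$, for the Long Homotopy Lemma's bound to reach it — so the quantitative estimate is sharp in this respect.
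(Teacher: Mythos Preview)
Your overall architecture---assume $\mathrm{inj}_p<\pi$, produce a short geodesic loop, invoke the Long Homotopy Lemma, extract via Morse theory a long geodesic of index at most one, and contradict the index estimate---matches the paper's. The gap is exactly where you flag it, and it is genuine: you run the Morse/min--max step in the based loop space $\Omega_{p,p}$, while the paper runs it in $\Omega_{p,q}$ for a carefully chosen $q\ne p$. Two problems arise in $\Omega_{p,p}$. First, exhibiting \emph{one} null-homotopy of $\gamma$ that passes through a long loop (your construction $\alpha_t\ast\bar\sigma_1$) gives no lower bound on the min--max value; you would need that \emph{every} null-homotopy does, and Klingenberg's Long Homotopy Lemma as stated is about homotopies of paths from $p$ to $q$, not about null-homotopies of loops at $p$. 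Second, to turn the min--max into an honest critical point of index $\leq 1$ you need nondegenerate Morse theory, and $\Omega_{p,p}$ carries no such guarantee (the constant loop is already degenerate, and $p$ may be conjugate to itself).

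The paper resolves both issues at once by perturbing the far endpoint: choose a regular value $q$ of $\exp_p$ with $d(p,q)<2\delta$, near $\gamma(l-\delta)$, and work in $\Omega_{p,q}$. Regularity makes every geodesic from $p$ to $q$ a nondegenerate critical point of $E$, so the standard Morse argument applies: if all geodesics of length $\geq 2\pi-3\delta$ had index $\geq 2$, one could deform any homotopy between the short minimizer $\gamma_0$ and the long geodesic $\gamma_1$ below that level, contradicting the Long Homotopy Lemma (which now applies verbatim, since we are comparing two geodesics from $p$ to $q$). The price of the perturbation is that the resulting geodesic $\sigma$ ends at $q\ne p$, so $X(q)\ne 0$ and Lemma~\ref{Lem:Index2} no longer applies directly; instead one uses Lemma~\ref{Lem:Index} with the boundary term $g(X(q),\dot\sigma)$ controlled by $N(\delta)=\max_{B(p,2\delta)}|X|$, and the condition $3\varepsilon\delta+N(\delta)<\pi(2\varepsilon-1)$ (which uses $\varepsilon>\tfrac12$) guarantees $\int_\sigma\sec(\dot\sigma,E)>\pi$. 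This perturbation-to-a-regular-value step is the missing idea in your proposal.
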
 

\begin{proof} 
If $\mathrm{inj}_p(M) < \pi$ then there is  a geodesic loop $\gamma$  based at $p$ of length $l< 2\pi$.     Choose $\delta$ such that
\begin{enumerate}
\item  $\gamma(l- \delta)$ is not conjugate to $p$. 
\item  $\mathrm{exp_p}$ is a diffeomorphism on $B_{p}(2\delta)$. 
\item $3\varepsilon \delta  + N(\delta) < \pi(2\varepsilon - 1)$ where $N(\delta) = \mathrm{max}\{ |X(q)| : q \in B_{p}(2\delta) \}$.
\item $3\delta< 2\pi - l$
\item $5\delta < 2 \pi$, and  
\end{enumerate}

   From Sard's theorem there exists a regular value $q \in B_{\gamma(l-\delta)} (\delta)$ of $\mathrm{exp_p}$.  Since $\gamma(l-\delta)$ is not conjugate to $p$ it is possible to choose a geodesic $\gamma_1$ starting at $p$ to $q$ with $3 \delta < \mathrm{length}(\gamma_1) < l$. 
    
    Let $\gamma_0$ be the minimizing geodesic from $p$ to $q$, then $\mathrm{length}(\gamma_0) \leq 2 \delta$.  In particular, $\gamma_0 \neq \gamma_1$.  Since $M$ is simply connected there is a homotopy $\gamma_t$ from $\gamma_0$ to $\gamma_1$. From the long homotopy  lemma we  have that any such homotopy must contain a curve of length at least     $2\pi - \mathrm{min}\{ \mathrm{length}(\gamma_i)\}$. Since we have $l(\gamma_0) < 2 \delta$ and   $l(\gamma_1)< l <  2\pi - 3 \delta$, the condition $5\delta \leq 2\pi$ implies that we must have a curve of length at least $2\pi - 2\delta$ in the homotopy.  
    
 Now  consider Morse theory applied to the space $\Omega_{p,q}$ with $E$ as Morse function.   Since $q$ is a regular value of $\mathrm{exp_p}$, we have that all of the critical points (geodesics)  are non-degenerate.     From Morse theory  if there was no geodesic with index less than two and length at least  $2\pi - 3 \delta$ then we could push the homotopy from $\gamma_0$ to $\gamma_1$ into the region where $E < 2\pi - 2\delta$ and contradict  the long homotopy lemma.  Therefore, we must have a geodesic from $p$ to $q$ with index zero or one which has length $r$  greater than $2\pi - 3 \delta$.  However, for a parallel field, $E$ along  this geodesic $\sigma$ we have 
 \[ \varepsilon r \leq  \int_0^r \mathrm{sec}_X(\dot{\sigma},  E) dt = \int_0^r \mathrm{sec}(\dot{\sigma},  E) dt + g(X, \dot{\sigma})(q) \leq \int_0^r \mathrm{sec}(\dot{\sigma},  E) dt + N(\delta) \]
 So we have 
 \[ \varepsilon (2\pi - 3 \delta) -  N(\delta)  \leq  \int_0^r \mathrm{sec}(\dot{\sigma},  E) dt\]
 Condition (3) then implies that $ \int_0^r \mathrm{sec}(\dot{\sigma},  E) dt > \pi$ for every unit parallel field along $\sigma$, showing that the index of $\sigma$ is $n-1$, a contradiction to the assumption $\mathrm{inj}_p(M) < \pi$ .   \end{proof}

Combining Theorems \ref{Thm:Berg} and \ref{Thm:Kling} now gives Theorem \ref{Thm:Sec}.   A natural question is whether the $1/2$ pinching constant is optimal in Theorem \ref{Thm:Sec}.  We note that appearance of the $1/2$ as opposed to the $1/4$ in the classical case comes from the difference in the index estimate (Lemma \ref{Lem:Index}).  We know from the examples in section 3 that Lemma \ref{Lem:Index} is optimal, so this at least shows that the method of this paper is not sufficient to improve the pinching constant.  

\begin{bibdiv}

\begin{biblist}
\bib{BE}{article}{
   author={Bakry, D.},
   author={{\'E}mery, Michel},
   title={Diffusions hypercontractives},
   language={French},
   conference={
      title={S\'eminaire de probabilit\'es, XIX, 1983/84},
   },
   book={
      series={Lecture Notes in Math.},
      volume={1123},
      publisher={Springer},
      place={Berlin},
   },
   date={1985},
   pages={177--206},
}

\bib{Berger1}{article}{
   author={Berger, Marcel},
   title={Sur certaines vari\'et\'es riemanniennes \`a courbure positive},
   language={French},
   journal={C. R. Acad. Sci. Paris},
   volume={247},
   date={1958},
   pages={1165--1168},
}

\bib{Berger2}{article}{
   author={Berger, M.},
   title={Les vari\'et\'es Riemanniennes $(1/4)$-pinc\'ees},
   language={French},
   journal={Ann. Scuola Norm. Sup. Pisa (3)},
   volume={14},
   date={1960},
   pages={161--170},
   review={\MR{0140054 (25 \#3478)}},
}

\bib{BrendleSchoen}{article}{
   author={Brendle, Simon},
   author={Schoen, Richard},
   title={Manifolds with $1/4$-pinched curvature are space forms},
   journal={J. Amer. Math. Soc.},
   volume={22},
   date={2009},
   number={1},
   pages={287--307},
}

%

\bib{Cao}{article}{
   author={Cao, Huai-Dong},
   title={Existence of gradient K\"ahler-Ricci solitons},
   conference={
      title={Elliptic and parabolic methods in geometry (Minneapolis, MN,
      1994)},
   },
   book={
      publisher={A K Peters, Wellesley, MA},
   },
   date={1996},
   pages={1--16},
}

\bib{Cao-Zhou}{article}{
   author={Cao, Huai-Dong},
   author={Zhou, Detang},
   title={On complete gradient shrinking Ricci solitons},
   journal={J. Differential Geom.},
   volume={85},
   date={2010},
   number={2},
   pages={175--185},
}
%

\bib{Chowetc}{book}{
   author={Chow, Bennett},
   author={Lu, Peng},
   author={Ni, Lei},
   title={Hamilton's Ricci flow},
   series={Graduate Studies in Mathematics},
   volume={77},
   publisher={American Mathematical Society, Providence, RI; Science Press,
   New York},
   date={2006},
}

%
   

\bib{doC}{book}{
author={do Carmo, Manfredo Perdig{\~a}o},
   title={Riemannian geometry},
   series={Mathematics: Theory \& Applications},
   note={Translated from the second Portuguese edition by Francis Flaherty},
   publisher={Birkh\"auser Boston Inc.},
   place={Boston, MA},
   date={1992},

}

\bib{Fang}{article}{
   author={Fang, Fu-quan},
   author={Man, Jian-wen},
   author={Zhang, Zhen-lei},
   title={Complete gradient shrinking Ricci solitons have finite topological
   type},
   language={English, with English and French summaries},
   journal={C. R. Math. Acad. Sci. Paris},
   volume={346},
   date={2008},
   number={11-12},
   pages={653--656},
}

\bib{FIK}{article}{
   author={Feldman, Mikhail},
   author={Ilmanen, Tom},
   author={Knopf, Dan},
   title={Rotationally symmetric shrinking and expanding gradient
   K\"ahler-Ricci solitons},
   journal={J. Differential Geom.},
   volume={65},
   date={2003},
   number={2},
   pages={169--209},
}

\bib{Futaki}{article}{
   author={Futaki, Akito},
   author={Sano, Yuji},
   title={Lower diameter bounds for compact shrinking Ricci solitons},
   journal={Asian J. Math.},
   volume={17},
   date={2013},
   number={1},
   pages={17--31},
   issn={1093-6106},
   review={\MR{3038723}},
   doi={10.4310/AJM.2013.v17.n1.a2},
}

\bib{GM}{article}{
   author={Gromoll, Detlef},
   author={Meyer, Wolfgang},
   title={On complete open manifolds of positive curvature},
   journal={Ann. of Math. (2)},
   volume={90},
   date={1969},
   pages={75--90},
   }

%

\bib{GS}{article}{
   author={Grove, Karsten},
   author={Shiohama, Katsuhiro},
   title={A generalized sphere theorem},
   journal={Ann. of Math. (2)},
   volume={106},
   date={1977},
   number={2},
   pages={201--211},
}

\bib{Hamilton}{article}{
   author={Hamilton, Richard S.},
   title={The formation of singularities in the Ricci flow},
   conference={
      title={Surveys in differential geometry, Vol.\ II},
      address={Cambridge, MA},
      date={1993},
   },
   book={
      publisher={Int. Press, Cambridge, MA},
   },
   date={1995},
   pages={7--136},
}

\bib{Hatcher}{book}{
   author={Hatcher, Allen},
   title={Algebraic topology},
   publisher={Cambridge University Press, Cambridge},
   date={2002},
}

\bib{KennardWylie}{article}{
author={Kennard, Lee},
author={Wylie, William},
title={Positive weighted sectional curvature},
note={arXiv:1410.1558}
}

\bib{Kling}{article}{
   author={Klingenberg, Wilhelm},
   title={\"Uber Riemannsche Mannigfaltigkeiten mit positiver Kr\"ummung},
   language={German},
   journal={Comment. Math. Helv.},
   volume={35},
   date={1961},
   pages={47--54},
   issn={0010-2571},
   review={\MR{0139120 (25 \#2559)}},
}

\bib{Klingbook}{book}{
   author={Klingenberg, Wilhelm},
   title={Riemannian geometry},
   series={de Gruyter Studies in Mathematics},
   volume={1},
   publisher={Walter de Gruyter \& Co.},
   place={Berlin},
   date={1982},

}


\bib{Koiso}{article}{
   author={Koiso, Norihito},
   title={On rotationally symmetric Hamilton's equation for
   K\"ahler-Einstein metrics},
   conference={
      title={Recent topics in differential and analytic geometry},
   },
   book={
      series={Adv. Stud. Pure Math.},
      volume={18},
      publisher={Academic Press, Boston, MA},
   },
   date={1990},
   pages={327--337},
}

%
  
\bib{Lich1}{article}{
   author={Lichnerowicz, Andr{\'e}},
   title={Vari\'et\'es riemanniennes \`a tenseur C non n\'egatif},
   language={French},
   journal={C. R. Acad. Sci. Paris S\'er. A-B},
   volume={271},
   date={1970},
   pages={A650--A653},
 }
 
 \bib{Lich2}{article}{
   author={Lichnerowicz, Andr{\'e}},
   title={Vari\'et\'es k\"ahl\'eriennes \`a premi\`ere classe de Chern non
   negative et vari\'et\'es riemanniennes \`a courbure de Ricci
   g\'en\'eralis\'ee non negative},
   language={French},
   journal={J. Differential Geometry},
   volume={6},
   date={1971/72},
   pages={47--94},
   }
   
   \bib{Lott}{article}{
   author={Lott, John},
   title={Some geometric properties of the Bakry-\'Emery-Ricci tensor},
   journal={Comment. Math. Helv.},
   volume={78},
   date={2003},
   number={4},
   pages={865--883}}
   
%

\bib{LV}{article}{
   author={Lott, John},
   author={Villani, C{\'e}dric},
   title={Ricci curvature for metric-measure spaces via optimal transport},
   journal={Ann. of Math. (2)},
   volume={169},
   date={2009},
   number={3},
   pages={903--991},
   }
   
\bib{MunteanuWang}{article}{
      author={Munteanu, Ovidiu},
   author={Wang, Jiaping},
   title={Analysis of weighted Laplacian and applications to Ricci solitons},
   journal={Comm. Anal. Geom.},
   volume={20},
   date={2012},
   number={1},
   pages={55--94}
}  

\bib{MunteanuWang2}{article}{
      author={Munteanu, Ovidiu},
   author={Wang, Jiaping},
   title={Geometry of shrinking Ricci solitons},
   note={arXiv:1410.3813}}

\bib{Morgan}{article}{
   author={Morgan, Frank},
   title={Manifolds with density},
   journal={Notices Amer. Math. Soc.},
   volume={52},
   date={2005},
   number={8},
   pages={853--858},
}  


\bib{MorganBook}{book}{
   author={Morgan, Frank},
   title={Geometric measure theory},
   edition={4},
   note={A beginner's guide},
   publisher={Elsevier/Academic Press, Amsterdam},
   date={2009},
   pages={viii+249},
}

\bib{Morgan3}{article}{
   author={Morgan, Frank},
   title={Manifolds with density and Perelman's proof of the Poincar\'e
   conjecture},
   journal={Amer. Math. Monthly},
   volume={116},
   date={2009},
   number={2},
   pages={134--142}
}

%

\bib{Per}{article}{
author= {Perelman, G. }
title={Ricci flow with surgery on three-manifolds.}
note={arXiv:math/0303109 .}
}

\bib{Per2}{article}{
author= {Perelman, G. }
title={The entropy formula for the {R}icci flow and its geometric
  applications.}
note={arXiv: math.DG/0211159.}
}

\bib{Petersen}{book}{
   author={Petersen, Peter},
   title={Riemannian geometry},
   series={Graduate Texts in Mathematics},
   volume={171},
   edition={2},
   publisher={Springer},
   place={New York},
   date={2006},
}




   
\bib{Sturm1}{article}{
   author={Sturm, Karl-Theodor},
   title={On the geometry of metric measure spaces. I},
   journal={Acta Math.},
   volume={196},
   date={2006},
   number={1},
   pages={65--131},}
   
\bib{Sturm2}{article}{   
   author={Sturm, Karl-Theodor},
   title={On the geometry of metric measure spaces. II},
   journal={Acta Math.},
   volume={196},
   date={2006},
   number={1},
   pages={133--177}
}


\bib{WangZhu}{article}{
   author={Wang, Xu-Jia},
   author={Zhu, Xiaohua},
   title={K\"ahler-Ricci solitons on toric manifolds with positive first
   Chern class},
   journal={Adv. Math.},
   volume={188},
   date={2004},
   number={1},
   pages={87--103},
}

\bib{Weber}{article}{
   author={Weber, Brian},
   title={Convergence of compact Ricci solitons},
   journal={Int. Math. Res. Not. IMRN},
   date={2011},
   number={1},
   pages={96--118},
}

\bib{WeiWylie}{article}{
   author={Wei, Guofang},
   author={Wylie, Will},
   title={Comparison geometry for the Bakry-Emery Ricci tensor},
   journal={J. Differential Geom.},
   volume={83},
   date={2009},
   number={2},
   pages={377--405},}     
   
\bib{WylieI}{article}{
   author={Wylie, William},
   title={Complete shrinking Ricci solitons have finite fundamental group},
   journal={Proc. Amer. Math. Soc.},
   volume={136},
   date={2008},
   number={5},
   pages={1803--1806},
}
   
\bib{WylieII}{article}{
author={Wylie, William}
title={Sectional Curvature for Riemannian manifolds with density.}
note={arXiv:1311.0267}
}

\end{biblist}
\end{bibdiv}

\end{document}